\newtheorem{theorem}{Theorem}[section]
\newtheorem{lemma}[theorem]{Lemma}
\newtheorem{proposition}[theorem]{Proposition}
\newtheorem{example}[theorem]{Example}
\theoremstyle{definition}
\newtheorem{remark}[theorem]{Remark}
\def\J#1#2#3{ \left\{ #1,#2,#3 \right\} }
\def\11{\textbf{$1$}}
\begin{document}

\numberwithin{equation}{section}

\title{Linear isometries between real JB$^*$-triples and C$^*$-algebras}

\author[M. Apazoglou]{Maria Apazoglou}
\address{Queen Mary, University of London, London, United Kingdom}
\email{m.apazoglou@qmul.ac.uk}

\author[A.M. Peralta]{Antonio M. Peralta}
\address{Departamento de An{\'a}lisis Matem{\'a}tico, Universidad de Granada,\\
Facultad de Ciencias 18071, Granada, Spain}
\email{aperalta@ugr.es}

\thanks{Second author partially supported by the Spanish Ministry of Science and Innovation,
D.G.I. project no. MTM2011-23843, and Junta de Andaluc\'{\i}a grants FQM0199 and
FQM3737.}

\thanks{Part of the results of this paper are contained in the first author's PhD
thesis \cite{apazoglou} presented to Queen Mary, University of
London, funded by EPSRC. The first author wishes to thank Professor
Cho-Ho Chu for his valuable guidance and support.}
\date{}
 \maketitle

\begin{abstract}
Let $T: A\to B$ be a (not necessarily surjective) linear isometry between two real JB$^*$-triples.
Then for each $a\in A$ there exists a tripotent $u_a$ in the bidual, $B'',$ of $B$ such that
\begin{enumerate}[$(a)$]
\item $\{u_a,T(\{f,g,h\}),u_a\}=\{u_a,\{T(f),T(g),T(h)\},u_a\}$, for all $f,g,h$ in the real JB$^*$-subtriple, $A_a,$ generated by $a$;
\item The mapping $\{u_a,T(\cdot),u_a\} :A_a\rightarrow B''$ is a linear isometry.
\end{enumerate}
Furthermore, when $B$ is a real C$^*$-algebra, the projection $p=p_a=
u_a^* u_a$ satisfies that $T(\cdot)p :A_a\rightarrow B''$ is an
isometric triple homomorphism. When $A$ and $B$ are real
C$^*$-algebras and $A$ is abelian of real type, then there exists a
partial isometry $u\in B''$ such that the mapping $T(\cdot)u^*u
:A\rightarrow B''$ is an isometric triple homomorphism. These
results generalise, to the real setting, some previous contributions
due to C.-H. Chu and N.-C. Wong, and C.-H. Chu and M. Mackey in 2004
and 2005. We give an example of a non-surjective real linear
isometry which cannot be complexified to a complex isometry, showing
that the results in the real setting can not be derived by a mere
complexification argument.
\end{abstract}

\section{Introduction}
Despite many recent applications of real C$^*$-algebras in operator
theory \cite{chu11, didenko_silbermann}, in JB$^*$-triples and
infinite dimensional geometry \cite{chu0, chu11, chu10,
edwards_ruttimann, FerMarPe, isidro_palacios_kaup, peralta,
peralta_stacho}, the theory of real C$^*$-algebras has not been
fully developed. In this paper, we study the linear geometry of real
C$^*$-algebras and in particular, the not necessarily surjective
linear isometries between real C$^*$-algebras, thereby extending the
isometry results in \cite{chu9}, \cite{chu2} and \cite{kadison} for
complex C$^*$-algebras.

A  celebrated result of Kadison \cite{kadison} states that a
surjective linear isometry $T:A\rightarrow B$ between complex
C$^*$-algebras is necessarily a Jordan triple isomorphism, that is,
\begin{equation*}
T(ab^*c+cb^*a)=T(a)T(b)^*T(c)+T(c)T(b)^*T(a)\qquad(a,b,c\in A).
\end{equation*}
This result need not be true when the hypothesis of $T$ being
surjective is not assumed. Nevertheless, it has been shown by C.-H.
Chu and N.-C. Wong \cite{chu10} that even if $T$ is non-surjective,
for each $a\in A$, there is a largest projection $p_a\in B^{**}$,
such that \begin{equation*} T(\cdot)p_a:A_a\rightarrow B^{**}
\end{equation*}
is an isometric Jordan triple homomorphism, where $A_a$ denotes the
JB$^*$-subtriple of $A$ generated by $a$. In \cite{chu9}, C.-H. Chu
and M. Mackey have further extended this result to the case in which
$A$ and $B$ are JB$^*$-triples, showing, among other things, that,
in this case, for each $a\in A$ there exists a tripotent $u_a\in
B^{**}$ such that
\begin{equation*}
\{u_a,T(\{f,g,h\}),u_a\}=\{u_a,\{T(f),T(g),T(h)\},u_a\} \ \ (f,g,h\in A_a)
\end{equation*} and the mapping $\{u_a,T(\cdot),u_a\}$
is an isometry.

Our objective is not only to extend these results to real
C$^*$-algebras but also to clarify the relationship between the
projection $p_a$ and the tripotent $u_a$ which was not given in
\cite{chu9}. Indeed, we show that both results above are valid for
real C$^*$-algebras and moreover, we have $p_a=u_a^*u_a$. Prior to
this result, we establish that when $A$ is a real C$^*$-algebra
which admits a complex character of real type, then for every not
necessarily surjective linear isometry $T$ from $A$ to another
C$^*$-algebra $B$, there exists a minimal projection $p$ in $B''$ satisfying:
\begin{enumerate}[$(a)$] \item $T\{a,b,c\} p= \{T(a),T(b),T(c)\} p$ and $p T(a)^* T(b) = T(a)^* T(b) p,$  for all $a,b,c$ in $A$;
\item $T(\cdot) p : A \to B''$ is a non-zero triple homomorphism
\end{enumerate} (see Proposition \ref{p linear contractions} and Theorem \ref{t Chu Wong Prop 4.3}). When $A$ does not admit
complex characters of real type, that is, when $A$ is of complex type we can find an example of a linear surjection $T$
from $A$ into another real C$^*$-algebra such that $T(\cdot) p=0$ whenever $T(\cdot) p$ is a triple homomorphism,
$T\{a,b,c\} p= \{T(a),T(b),T(c)\} p$ and $p T(a)^* T(b) = T(a)^* T(b) p,$  for all $a,b,c$ in $A$ (compare Example
\ref{examp real/complex type to ChuWong Thm 3.10}).

We culminate the paper showing that every non-surjective
linear isometry between real JB$^*$-triples $T:A \to B$, although need not be a triple homomorphism,
locally reduces to a triple homomorphism, that is, for each $a\in A$, there exists a tripotent $u\in B''$ such that
\begin{enumerate}[$(a)$]
\item $\{u,T(\{f,g,h\}),u\}=\{u,\{T(f),T(g),T(h)\},u\}$, for all $f,g,h$ in the real JB$^*$-subtriple generated by $a$;
\item The mapping $\{u,T(\cdot),u\} :A_a\rightarrow B''$ is a linear isometry.
\end{enumerate}
Furthermore, when $B$ is a real C$^*$-algebra, the projection $p= u^* u$ satisfies that
$T(\cdot)p :A_a\rightarrow B''$ is an isometric triple homomorphism (compare Theorem \ref{t 2iso}).

Our results cannot be achieved by simple
complexification since, as shown later, the complexification of a
non-surjective real isometry need not be, in general, an isometry.
We develop some new techniques to accomplish our task.

\section{Preliminaries: Jordan structures in real C$^*$-algebras}

Let $A$ be a real Banach*-algebra. Then $A$ is called a \emph{real
C$^*$-algebra} if $\|a^*a\|=\|a\|^2$ and $1+a^*a$ is invertible for all
$a\in A$. If $A$ is non-unital, then we require that $1+a^*a$ is
invertible in the unit extension $A\oplus\mathbb{R} 1$. Equivalently,
a real Banach *-algebra $A$ is a real C$^*$-algebra if, and only if, it is
isometrically *-isomorphic to a norm-closed real *-algebra of bounded
operators on a real Hilbert space (cf. \cite[Corollary 5.2.11]{li1}).
Trivially, every (complex)
C$^*$-algebra is a real C$^*$-algebra when scalar multiplication is
restricted to the real field. Let $A$ be a real C$^*$-algebra. We
denote by $A'$ the dual space of $A$. The complexification
$A_c=A\oplus iA$ is a C$^*$-algebra in a suitable norm such that $A$
identifies with a real closed *-subalgebra of $A_c$. The dual
space of a complex C$^*$-algebra $B$ will be denoted by $B^*$.

Throughout the paper, given a real or complex C$^*$-algebra $A$, $A_{sa}$
will stand for the set of all self-adjoint elements in $A$.

We shall now survey some Jordan structures associated with real
and complex C$^*$-algebras. A (complex)
\emph{JB$^*$-triple} is a complex Banach space $A$ equipped with a
triple product $\{\cdot,\cdot,\cdot\}:A\times A\times A\rightarrow
A$ which is linear and symmetric in the outer variables, conjugate
linear in the middle one and satisfies the following conditions:
\begin{enumerate}
\item (Jordan identity) for $a,b,x,y,z\in A$,
\begin{equation}\label{jordanid}\{a,b,\{x,y,z\}\}=\{\{a,b,x\},y,z\}
-\{x,\{b,a,y\},z\}+\{x,y,\{a,b,z\}\};\end{equation}
\item $D(a,a):A\rightarrow A$ is an hermitian
linear operator with non-negative spectrum, where $D(a,b)(x)=\{a,b,x\}$ with $a,b,x\in
A$;
\item $\|\{x,x,x\}\|=\|x\|^3$ for all $x\in A$.
\end{enumerate}
A complex subspace $B$ of $A$ is called a \emph{subtriple} if it is
closed with respect to the triple product, that is, $x,y,z\in B$
implies $\{x,y,z\}\in B$. A real subtriple of $A$ is a real subspace
of $A$ which is closed with respect to the triple product. We define
a \emph{real JB$^*$-triple} to be a closed real subtriple of a
JB$^*$-triple. When restricted to real scalar multiplication, every
(complex) JB$^*$-triple is also a real JB$^*$-triple.
It has been shown in \cite{isidro_palacios_kaup} that a
real JB$^*$-triple can be complexified to become a JB$^*$-triple.
Furthermore, every real JB$^*$-triple $B$ is a real form of a complex
JB$^*$-triple, that is, there exist a (complex) JB$^*$-triple $B_{c}$
and a conjugate linear isometry $\tau: B_c\rightarrow B_c$ of period 2
such that $B=\{b\in B_c\::\:\tau(b)=b\}$. We can actually identify $B_{c}$ with the
complexification of $B$.

A real C$^*$-algebra $A$ is a real JB$^*$-triple in the
triple product:
\begin{equation}\label{eq triple product in C*-algebras}
\{a,b,c\}=\frac{1}{2}(ab^*c+cb^*a)\qquad(a,b,c\in A).
\end{equation}
We shall always assume the above triple product in a real or complex
C$^*$-algebra. \\
An element $e$ in a complex or real JB$^*$-triple $A$ is called
\emph{tripotent} if $\{e,e,e\}=e$.
Let $A$ be a real JB$^*$-triple. For an element $a\in A$, we define
the operators $D(a):A\rightarrow A$ and $Q(a):A\rightarrow A$ by
\begin{equation*} D(a)(x)=\{a,a,x\}\;\textrm{and}\;
Q(a)(x)=\{a,x,a\}\qquad(x\in A).
\end{equation*}
The Peirce projections associated with a tripotent $e\in A$ are given
by \begin{align*}
P_2(e)&=Q(e)^2\\
P_1(e)&=2(D(e)-Q(e)^2)\\
P_0(e)&=I-2D(e)+Q(e)^2.
\end{align*}
The space $A_j(e)=P_j(e)(A)$ is called the Peirce-$j$-space
associated to the tripotent $e$. Thus, each tripotent $e$ in a real or complex JB$^*$-tripe, $A$,
defines a decomposition of $A$ in terms of Peirce subspaces $$ A =
A_{0} (e) \oplus A_{1} (e) \oplus A_{2} (e).$$ Another decomposition
can be given in terms of the eigensubspaces of $Q(e)$, $$A= A^{1}
(e) \oplus A^{-1} (e) \oplus A^{0} (e)$$ where $A^{k} (e) := \{ x\in
A : Q(e) (x) :=\{ e,x,e\} = k x \}$ is a real Banach subspace of
$A$. Furthermore, the following identities and rules hold:
\begin{align*}
&A_{2} (e) = A^{1} (e) \oplus A^{-1} (e),\\
&A_{1} (e)\oplus A_{0} (e) = A^{0} (e) \\
&\{A^{i} (e),A^{j}(e),A^{k} (e)\} \subseteq A^{i j k} (e), \hbox{
whenever } i j k \ne 0.
\end{align*}

A real JB$^*$-triple which is also a dual Banach space is called a
{real} (respectively, complex) \emph{JBW$^*$-triple}. Every real or
complex JBW$^*$-triple has a (unique) predual (cf. \cite{BarTi},
\cite[Corollary 3.2]{edwards_ruttimann}, \cite{Horn} and
\cite{MarPe}) and its triple product is separately weak$^*$
continuous (see \cite{MarPe}).

Given a tripotent $e$ in a real or complex JB$^*$-triple
$A$, the Peirce space $A_2 (e)$ is a real JB$^*$-algebra and $A^{1}
(e)$ is a JB-algebra with product and involution given by $a\circ_e b
:= \{ a,e,b\}$ and $a^{\sharp_e} := \{ e,a,e\}$, respectively. It can be easily seen
that, for each $a\in A$, the expression \begin{equation}\label{equation positive element}
P_2 (e) \J aae = \J {P_2 (e) (a)}{P_2 (e) (a)}e+\J {P_1 (e) (a)}{P_1 (e) (a)}e
\end{equation} $$= P^1 (e) \J aae $$
defines a positive element in the JB-algebra $A^1 (e)$ (see, for example the arguments given in \cite[Lemma 2.1]{BuFerMarPe},
which remain valid in the real setting).

Two elements $a,b$ in a real or complex JB$^*$-triple $A$ are said to be
\emph{orthogonal} (written $a\perp b$) if $L(a,b) =0$. By Lemma 1 in \cite{BurFerGarMarPe}
(whose proof is also valid for real JB$^*$-triples), two elements $a,b$ in $A$ are orthogonal if,
and only if, $\J aab =0$ (equivalently, $\J bba =0$). It follows from the Peirce rules that, for each tripotent
$e$ in $A$, the sets $A_0(e)$ and $A_2 (e)$ are
orthogonal.

In a real or complex JB$^*$-triple, $A$, a partial order relation, $\leq$, can be
defined in the set of all tripotents elements in $A$ given by $e\leq
f$ if, and only if, $f-e$ is a tripotent in $A$ with $f-e\perp e$.
A non-zero tripotent $e$ is called \emph{minimal} whenever $A^{1}
(e) = \mathbb{R} e$ (since in the complex case $A^{-1}(e)= i
A^{1}(e)$, this definition is equivalent to say $A_{2} (e) =
\mathbb{C} e$). However, in the real setting the dimensions of
$A^1(e)$ and $A^{-1} (e)$ are not correlated as there exist examples
of real Cartan factors $A$ containing a minimal tripotent $e$
satisfying dim$(A^{-1} (e))=\infty$. Another illustrating example is
$A=\mathbb{R}, \mathbb{C}$ or $\mathbb{H}$ and $e= 1$, then $e$ is
minimal and dim$(A^{-1}(e))=0,1$ and $3,$ respectively. In general,
given a minimal tripotent $e$ in a real JB$^*$-triple $A$, $\{
e,A,e\} = A_2 (e)$ need not coincide with $\mathbb{R} e = A^{1}
(e)$.

In \cite{peralta_stacho} a tripotent $e$ is called minimal when it is
minimal with respect to the partial order relation $\leq$, that is,
$e$ is minimal iff $e\geq f\neq 0$ implies $e=f$. To avoid
confusion, let us call these kind of tripotents \emph{order minimal
tripotents}. Clearly every minimal tripotent is order-minimal,
however, the reciprocal statement is not, in general, true. For
example, the unit element $1$ in $C[0,1]$ (real or complex valued)
is order minimal because there exists no tripotent $e\in C[0,1]$
satisfying $1>e$. When $A$ is a real JBW$^*$-triple minimal and
order minimal tripotents in $A$ coincide (cf. Proposition 2.2 and
Remark 2.3, \cite{peralta_stacho}).

We recall that a minimal tripotent $e$ in a real JB$^*$-triple $E$
is called \emph{reduced} whenever $E_{2} (e) = \mathbb{R} e$
(equivalently, $E^{-1} (e) = 0$). The real JB$^*$-triple $E$ is said to be
\emph{reduced} when every minimal tripotent $e\in E$ is reduced (cf. \cite[11.9]{Loos2}).

A real W$^*$-algebra is a real C$^*$-algebra $A$ having
a predual, $A_{_{'}}$, in which case, $A_{_{'}}$ is unique up to linear
isometric isomorphism and the product of $A$ is separately weak$^*$
continuous (cf. \cite{IsRo96}). We note that the second dual $A''$ of a real
C$^*$-algebra $A$ is a real W$^*$-algebra. Moreover, let $A_c$ and $A_c^{**}$ denote the
complexifications of $A$ and its bidual, respectively, then $A_c^{**}=A''\oplus i A''$.
Every W$^*$-algebra is a real JB$^*$-triple with respect to the product given in $(\ref{eq triple product in C*-algebras})$.

An element $p$ in a real C$^*$-algebra $A$ is called a
\emph{projection} if $p=p^*=p^2$. A projection $p$ in $A$ is \emph{minimal} whenever
it is a minimal tripotent in $A$, that is, $A^{1} (p) =\{x\in A : px^* p = x\} =
\mathbb{R} p$. It is well known that, when $A$ is regarded as a real JB$^*$-triple,
tripotents in $A$ correspond to \emph{partial isometries} in $A$ (i.e. elements $u\in A$ with $u^*u$
being a projection in $A$).

The next lemma should be known for experts but we were not able to find an explicit reference.

\begin{lemma}\label{lemma2js}
Let $u$ be a tripotent in a real C$^*$-algebra $A$ regarded as
a real JB$^*$-triple. Then $u$ is minimal if, and only if, $p=u^*u$ (respectively, $q=uu^*$)
is a minimal projections in $A$.
\end{lemma}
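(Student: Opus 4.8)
The plan is to reduce the whole statement to an explicit linear isomorphism between the eigenspaces of the operators $Q(u)$, $Q(p)$ and $Q(q)$, built from one-sided multiplication by $u$ and $u^*$. First I would translate the hypotheses into the C$^*$-language: since $u$ is a tripotent it is a partial isometry, $\{u,x,u\}=ux^*u$, and $p=u^*u$, $q=uu^*$ are its initial and final projections, satisfying $up=u=qu$ and $pu^*=u^*=u^*q$. By the definitions recalled above, $u$ (resp. $p$, $q$) is minimal precisely when the eigenspace $A^1(e)=\{x\in A:\{e,x,e\}=x\}$ of $Q(e)$ for eigenvalue $1$ reduces to $\mathbb{R}e$, i.e. $A^1(u)=\mathbb{R}u$ (resp. $A^1(p)=\mathbb{R}p$, $A^1(q)=\mathbb{R}q$). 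A direct Peirce computation (using $P_2(e)=Q(e)^2$) gives $A_2(u)=qAp$, $A_2(p)=pAp$, $A_2(q)=qAq$, and since $A^1(e)\subseteq A_2(e)=A^1(e)\oplus A^{-1}(e)$ it suffices to analyse $Q$ on these corners.

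Second, I would introduce $\Phi:A_2(u)\to A_2(p)$, $\Phi(x)=u^*x$, and $\Psi:A_2(p)\to A_2(u)$, $\Psi(y)=uy$. Using $u^*u=p$, $uu^*=q$ together with $x=qxp$ and $y=pyp$, one checks that $\Phi$ and $\Psi$ are well defined and mutually inverse, so $\Phi$ is a linear bijection. The crucial step is the intertwining identity $Q(p)\,\Phi=\Phi\,Q(u)$ on $A_2(u)$: both sides send $x$ to $px^*u$ (for $\Phi\,Q(u)$ use $u^*u=p$, for $Q(p)\,\Phi$ use $up=u$). This identity forces $\Phi$ to carry the $k$-eigenspace of $Q(u)$ bijectively onto the $k$-eigenspace of $Q(p)$ for every $k$; in particular it restricts to a linear isomorphism $A^1(u)\to A^1(p)$ with $\Phi(u)=u^*u=p$.

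Finally, since $\Phi$ is a bijection carrying $u$ to $p$, we get $A^1(u)=\mathbb{R}u$ iff $A^1(p)=\mathbb{R}p$, while $u\neq0$ iff $p\neq0$ (because $\|p\|=\|u^*u\|=\|u\|^2$); this is exactly the equivalence ``$u$ minimal iff $p$ minimal''. The statement for $q=uu^*$ follows by the symmetric argument with right multiplications $R_{u^*}:x\mapsto xu^*$ and $R_u:x\mapsto xu$ between $A_2(u)$ and $A_2(q)$, the analogous intertwining $Q(q)\,R_{u^*}=R_{u^*}\,Q(u)$ (both sides sending $x$ to $ux^*q$), and $R_{u^*}(u)=uu^*=q$. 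The only genuine subtlety, and the point where the real case departs from the complex one, is that minimality must be read off the eigenspace $A^1(e)$ rather than off the whole Peirce-$2$ space $A_2(e)$ (these coincide only for reduced tripotents); this is precisely why the argument is routed through the eigenspace decomposition of $Q$ and the intertwining identity, rather than through the (equally valid) Jordan isomorphism $A_2(u)\cong A_2(p)$.
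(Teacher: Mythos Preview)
Your proof is correct and follows essentially the same route as the paper: the paper's argument consists of the two identities $A^{1}(p)=pA_{sa}p$ and $uA^{1}(p)=A^{1}(u)$, which together say that left multiplication by $u$ is a bijection $A^{1}(p)\to A^{1}(u)$ sending $p$ to $u$. Your map $\Psi$ is exactly this bijection, and your intertwining identity $Q(p)\,\Phi=\Phi\,Q(u)$ is a more explicit way to verify that $\Phi=\Psi^{-1}$ carries $A^{1}(u)$ onto $A^{1}(p)$; the paper simply states the result without the eigenspace-intertwining justification.
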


\begin{proof} The equivalence follows from the following identities:
$A^{1} (p) = p A_{sa} p,$ $  u A^{1}(p)= A^{1} (u) = \{ x\in A : u x^* u = x\}.$
\end{proof}

Let $A$ be a real W$^*$-algebra. 
Two projections $p$ and $q$ in $A$ are called \emph{orthogonal} if $pq=0$.
A projection $p$ is \emph{majorised} by $q$, i.e. $p\leq q$ in the partial order
of the cone $A_+$, if and only if $p(1-q)=0$. When $A$ is regarded as a JB$^*$-triple
this partial order agrees with the partial order defined in the set of tripotents
(i.e. partial isometries) of $A$.

\begin{lemma}\label{1.4.7}{\rm
(\cite{li1},Proposition 4.3.4)} Let $A\subset B(H)$ be a real
W$^*$-algebra. Then the supremum $\bigvee_{i\in I}p_i$ of a family
$\{p_i\}_{i\in I}$ of projections in $A$ exists with respect to the
ordering induced by the cone $A_+$. Further, we have
\begin{equation*}
\bigvee_{i\in I}p_i:H \rightarrow\overline{[\cup_{i\in I}p_iH]}.
\end{equation*}
is the projection onto the closed linear span $\overline{[\cup_{i\in
I}p_iH]}$ of $\cup_{i\in I}p_iH$.
\end{lemma}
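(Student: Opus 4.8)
The plan is to run the classical von Neumann-algebra construction of range projections, but carried out \emph{inside} the real W$^*$-algebra $A$, relying on two structural facts that are available in the real setting: the self-adjoint part $A_{sa}$ is monotone complete, in the sense that every bounded increasing net in $A_{sa}$ has a supremum in $A$ attained as a weak$^*$ (equivalently strong) limit; and continuous functional calculus applied to a self-adjoint element of $A$ returns an element of $A$. I would also use the fact, already recorded above, that on projections the cone order $\leq$ of $A_+$ coincides with the spatial order $pH\subseteq qH$.

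First I would treat a single positive element. For $a\in A_+$ I define its range projection $r(a)$ as the supremum of the increasing net $\{f_n(a)\}_{n}$, where $f_n\colon[0,\|a\|]\to[0,1]$ are continuous, vanish at $0$, and increase pointwise to $\chi_{(0,\infty)}$. Each $f_n(a)$ lies in $A$ by functional calculus, the net is increasing and bounded above by $1$, so $r(a):=\sup_n f_n(a)\in A$ by monotone completeness; a routine check shows $r(a)$ is a projection whose range is $\overline{aH}$.

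Next I would handle finite families. For projections $p_{1},\dots,p_{k}$ I put $q=r(p_{1}+\cdots+p_{k})\in A$. Because each $p_{j}\ge 0$, one has $\ker(p_1+\cdots+p_k)=\bigcap_j \ker p_j$, whence $\overline{(p_1+\cdots+p_k)H}=\overline{p_1H+\cdots+p_kH}$; thus $q$ projects onto this span, dominates every $p_{j}$ (since $p_jH\subseteq qH$), and is dominated by any common upper bound, so $q=p_1\vee\cdots\vee p_k$ in $A$. For an arbitrary family $\{p_i\}_{i\in I}$ I would then index by the directed set of finite subsets $F\subseteq I$ and form the increasing net $q_F=\bigvee_{i\in F}p_i$, bounded above by $1$; monotone completeness yields $P:=\sup_F q_F\in A$, which, being a monotone limit of projections, is again a projection.

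Finally I would identify $P$. Since $q_F H\subseteq PH$ we get $p_iH\subseteq PH$ for all $i$, and so $\overline{[\cup_{i\in I}p_iH]}\subseteq PH$; conversely each $q_F H$ lies in $\overline{[\cup_{i\in I}p_iH]}$ and $P$ is the strong limit of the $q_F$, forcing $PH\subseteq\overline{[\cup_{i\in I}p_iH]}$. Hence $P$ is the projection onto $\overline{[\cup_{i\in I}p_iH]}$, and it is the least upper bound of $\{p_i\}$: it dominates each $p_i$, while any projection $r\ge p_i$ for all $i$ satisfies $p_iH\subseteq rH$, whence $\overline{[\cup_{i\in I}p_iH]}\subseteq rH$, i.e. $P\le r$. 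The step requiring the most care — and the one for which I would lean on the cited real structure theory — is the transfer to the real setting of the two analytic facts, namely monotone completeness of $A_{sa}$ and invariance of $A$ under continuous functional calculus; once these are in hand, the remaining lattice computation is the standard one.
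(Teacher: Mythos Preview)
Your argument is correct and follows the standard von Neumann-algebra construction of range projections, adapted to the real setting via monotone completeness of $A_{sa}$ and closure of $A$ under real continuous functional calculus; both of these structural facts hold for real W$^*$-algebras (e.g.\ by passing to the complexification and using that the involution $\tau$ fixes suprema and real polynomials in self-adjoint elements). There is nothing to compare against here: the paper does not supply its own proof of this lemma but simply quotes it as Proposition 4.3.4 of Li's book \cite{li1}, so your write-up is in effect a reconstruction of the cited result rather than an alternative to anything in the paper.
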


\begin{lemma}\label{lemma3js}
Let $\{p_\phi\}_{\phi\in Q}$ be a family of projections in a real
W$^*$-algebra $W$. Let $p=\bigvee_{\phi\in Q}p_\phi$. If $ap_\phi=0$
for some $a\in W$ and all $\phi\in Q$, then $ap=0$.
\end{lemma}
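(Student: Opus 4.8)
The plan is to pass from the abstract real W$^*$-algebra $W$ to a concrete realisation on a real Hilbert space, where the supremum of projections becomes transparent. Since every real W$^*$-algebra can be represented concretely, I would fix a faithful representation realising $W$ as a real W$^*$-algebra of operators on a real Hilbert space $H$, so that Lemma \ref{1.4.7} applies. That lemma identifies the supremum $p=\bigvee_{\phi\in Q}p_\phi$, computed in the projection lattice of $W$, with the orthogonal projection of $H$ onto the closed linear span $M:=\overline{[\cup_{\phi\in Q}p_\phi H]}$ of the ranges $p_\phi H$. In particular $\ran p = M$.

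Next I would reformulate the hypothesis geometrically. The condition $ap_\phi=0$ means precisely that $a$ annihilates the range of $p_\phi$: for every $\xi\in H$ we have $a(p_\phi\xi)=(ap_\phi)\xi=0$, so $p_\phi H\subseteq\ker a$. As $a$ is a bounded operator on $H$, its kernel $\ker a$ is a norm-closed subspace; hence it contains not only each $p_\phi H$ but also their union and, being closed and linear, the closed linear span $M$ of that union. Thus $M=\ran p\subseteq\ker a$.

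Finally I would conclude by a one-line continuity argument: for an arbitrary $\xi\in H$ the vector $p\xi$ lies in $\ran p\subseteq\ker a$, so $a p\xi=0$; as $\xi$ is arbitrary this gives $ap=0$ as operators on $H$, and since the representation is faithful, $ap=0$ in $W$.

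The only genuine point requiring care — and the reason the statement is not entirely formal — is the first step: one must know that the lattice-theoretic supremum $\bigvee_{\phi\in Q}p_\phi$ taken inside $W$ really is the projection onto $M$, i.e. that forming suprema of projections is compatible with the representation on $H$. This is exactly what Lemma \ref{1.4.7} supplies in the real W$^*$-setting, so once it is invoked the remainder is the routine observation that a bounded operator vanishing on each $p_\phi H$ vanishes on their closed linear span.
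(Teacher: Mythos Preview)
Your proof is correct and follows essentially the same route as the paper: represent $W$ concretely on a real Hilbert space so that Lemma~\ref{1.4.7} identifies $p$ with the projection onto $\overline{[\cup_{\phi} p_\phi H]}$, and then observe that $a$ vanishes on each $p_\phi H$, hence on their closed linear span by continuity. The paper's argument is the same one compressed into a single sentence.
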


\begin{proof}
By hypothesis, $a\in W$ annihilates on every element in the linear
span of $\bigcup_{i\in I} p_i (H)$, and by continuity $a p(\xi) =
0$, for every $\xi\in H$.
\end{proof}

If $B$ is a real JB$^*$-triple, then its second dual, $B''$, is also a
real JB$^*$-triple. Moreover, $B$ is the real form of a complex
JB$^*$-triple $B_c$, that is, there is a conjugate linear isometry
$\tau:B_c\rightarrow B_c$ of period 2 such that $B=\{b\in
B_c\::\:\tau(b)=b\}$. Further, the bidual map
$\sigma=\tau^{**}:B_c^{**}\rightarrow B_c^{**}$ is a conjugate
linear isometry of period 2 and $B''=\{b\in
B_c^{**}\::\:\sigma(b)=b\}$. The ``dual'' map
$\widetilde{\tau}:B_c{^*}\rightarrow B_c^{*}$ defined by
\begin{equation*}
\widetilde{\tau}(\phi)(b)=\overline{\phi(\tau(b))}\qquad(\phi \in B_c^*,\,b\in B_c)
\end{equation*} is a conjugate linear isometry of period 2 and the mapping $$(B_c^*)^{\widetilde{\tau}} \to B'$$
$$\varphi \mapsto \varphi|_{B}$$ is a surjective linear isometry.

We observe that if $u$ is a tripotent in
$B_c^{**}$, then $\sigma(u)$ is also a tripotent in $B_c^{**}$. In
fact, letting $U(B_c^{**})$ be the set of all tripotents in
$B_c^{**}$, the set $U(B'')$ of tripotents in $B''$ is
$U(B_c^{**})\cap B''=\{u\in U(B_c^{**})\::\:\sigma(u)=u\}$
\cite{edwards_ruttimann}. For a functional $\phi\in B_c^*$, there
exists a unique tripotent $u_\phi\in B_c^{**}$, called the
\emph{support tripotent} of $\phi$, such that $\phi=\phi\circ
P_2(u_\phi)$ and $\phi|_{P_2(u_\phi)(B_c^{**})}$ is a faithful
normal positive functional on the JBW$^*$-algebra
$P_2(u_\phi)(B_c^{**})$ (cf. \cite{FriRu}). We note that if $u_\phi\in B_c^{**}$ is the
support tripotent of $\phi\in B_c^{*}$, then $\sigma(u_\phi)$ is the
support tripotent of $\sigma_*(\phi)$. On the other hand, if $\phi$
is in $B'$, its complex extension $\phi_c\in B_c^*$ has support
tripotent $u_{\phi_c}\in B_c^{**}$ such that
$\sigma(u_{\phi_c})=u_{\phi_c}$ since $\sigma^{\sharp} (\phi_c)=\phi_c$.
Hence $u_{\phi_c}\in U(B'')$ and we call it the \emph{support
tripotent} of $\phi$ in $B''$, denoted by $u_\phi$ (cf. \cite[Lemma 2.2]{peralta}).
Finally, we note that $\phi$ is an extreme point of the closed unit ball of $A_*$ if
and only if its support tripotent $u_{\phi}$ is a minimal tripotent
in $A$ \cite[Corollary 2.1]{peralta_stacho}. Moreover, in this case
$\phi = \phi P^{1} (u_{\phi})$ (and hence $\phi = \phi Q(u_{\phi})$)
(\cite[Lemma 2.7]{peralta_stacho}). Therefore
$$\phi (x) u_{\phi} =\frac12 \left( Q(u_{\phi})^2 (x) + Q(u_{\phi}) (x)\right)= P^{1} (u_{\phi} ) (x),$$
for every $x\in A$. Based on these observations and the complex
results in \cite[Proposition 1.2]{barton_friedman} we have the
following lemma.

\begin{lemma}\label{lemma1js}{\rm (\cite[Lemma 2.4]{peralta})}
Let $B''$ be the second dual of a real
C$^*$-algebra $B$ and let $\phi\in B'$ with $\|\phi\|=1$. Let $u,v\in
B''$ such that $\phi(u)=\|u\|=1=\phi(v)=\|v\|$ then
\begin{enumerate}
\item $\phi\{x,y,v\}=\phi\{x,y,u\}=\phi\{y,x,u\}$;
\item $\phi\{x,x,u\}\geq0$;
\item $|\phi\{x,y,u\}|^2\leq\phi\{x,x,u\}\phi\{y,y,u\}$ (Cauchy-Schwarz inequality),
\end{enumerate} for all $x,y\in B''$.
\end{lemma}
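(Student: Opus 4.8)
The plan is to deduce the real statement from its complex counterpart, \cite[Proposition 1.2]{barton_friedman}, by passing to the complexification and then reading the conclusions back on the real form. Recall from the observations preceding the lemma that $B''$ is the real form of the complex JBW$^*$-triple $B_c^{**}=B''\oplus iB''$, cut out by the period-$2$ conjugate linear isometry $\sigma$ via $B''=\{b\in B_c^{**}:\sigma(b)=b\}$. In particular the canonical embedding $B''\hookrightarrow B_c^{**}$ is isometric and identifies $B''$ with a real subtriple of $B_c^{**}$, so that the triple product $\{x,y,z\}$ computed in $B''$ agrees with the one computed in $B_c^{**}$ whenever $x,y,z\in B''$.

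First I would complexify the data. Using the surjective linear isometry $(B_c^*)^{\widetilde{\tau}}\to B'$, $\varphi\mapsto\varphi|_B$, let $\phi_c\in B_c^*$ be the unique complex extension of $\phi$ with $\sigma^{\sharp}(\phi_c)=\phi_c$; then $\|\phi_c\|=\|\phi\|=1$. The adjoint relation $\langle\phi_c,z\rangle=\langle\phi_c|_B,z\rangle=\langle\phi,z\rangle$ for $z$ in the image of $B''\hookrightarrow B_c^{**}$ shows that $\phi_c$ restricts to $\phi$ on $B''$. Since $u,v\in B''$ embed isometrically into $B_c^{**}$, this gives $\phi_c(u)=\phi(u)=\|u\|=1$ and $\phi_c(v)=\phi(v)=\|v\|=1$, so the hypotheses of the complex result hold for the norm-one functional $\phi_c$ and the norm-one elements $u,v\in B_c^{**}$.

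Next I would invoke \cite[Proposition 1.2]{barton_friedman} in $B_c^{**}$: the assignment $(x,y)\mapsto\phi_c\{x,y,u\}$ is a positive semi-definite hermitian sesquilinear form, independent of the choice of norm-one element peaking at $\phi_c$. This yields at once, for all $x,y\in B_c^{**}$, the identities $\phi_c\{x,y,v\}=\phi_c\{x,y,u\}=\overline{\phi_c\{y,x,u\}}$, the positivity $\phi_c\{x,x,u\}\geq 0$, and the Cauchy--Schwarz inequality $|\phi_c\{x,y,u\}|^2\leq\phi_c\{x,x,u\}\,\phi_c\{y,y,u\}$.

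Finally I would restrict to $x,y\in B''$. For such $x,y$ the element $\{x,y,u\}$ again lies in $B''$, so $\phi_c\{x,y,u\}=\phi\{x,y,u\}$ is a real number, and likewise $\phi\{y,x,u\}\in\mathbb{R}$. Consequently the hermitian symmetry $\phi_c\{x,y,u\}=\overline{\phi_c\{y,x,u\}}$ collapses to the plain symmetry $\phi\{x,y,u\}=\phi\{y,x,u\}$, and the three displayed conclusions of the lemma follow verbatim, the modulus in $(3)$ being now the absolute value of a real scalar. The substance of the result is carried entirely by the complex Proposition; the only points genuinely needing care are the norm-preservation and $\sigma^{\sharp}$-invariance of the extension $\phi_c$, which guarantee the complex hypotheses, and the fact that $B''$ is a subtriple of $B_c^{**}$, which guarantees that real and complex triple products (and hence the values of $\phi$ and $\phi_c$ on them) coincide. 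Both are exactly the structural facts recorded before the statement, so no further obstacle remains.
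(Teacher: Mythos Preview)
Your proof is correct and follows exactly the approach the paper indicates: the paper does not give a detailed argument for this lemma but simply states that it is ``based on these observations and the complex results in \cite[Proposition 1.2]{barton_friedman},'' and you have carefully spelled out precisely that complexification argument. The only content beyond the cited complex proposition is the verification that $\phi_c$ has norm one and restricts to $\phi$ on $B''\subset B_c^{**}$, and that the triple products computed in $B''$ agree with those in $B_c^{**}$---both of which you handle correctly using the structural facts recorded in the paper just before the lemma.
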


\begin{lemma}\label{p0space}
Let $B''$ be the second dual of a real C$^*$-algebra $B$ and let
$\phi\in B'$ be an extreme point in the closed unit ball $B'_1$. Let
$N_\phi=\{b\in B''\::\:\phi\{b,b,u\}=0\}$ where $u$ is the support
tripotent of $\phi$. Then $N_\phi=P_0(u)(B'')$.
\end{lemma}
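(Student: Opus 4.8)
The plan is to prove the two inclusions separately, using throughout that, since $\phi$ is an extreme point of $B'_1$, its support tripotent $u$ is a minimal tripotent of $B''$, so that $(B'')^1(u)=\mathbb{R}u$, and that $\phi=\phi P^1(u)$ with $\phi(u)=\|u\|=1$ (as recorded in the preliminaries).

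The inclusion $P_0(u)(B'')\subseteq N_\phi$ is immediate: the Peirce spaces $B''_0(u)$ and $B''_2(u)$ are orthogonal, and since $u\in B''_2(u)$ every $b_0\in P_0(u)(B'')$ satisfies $b_0\perp u$, which by the characterisation of orthogonality means $\{b_0,b_0,u\}=0$; hence $\phi\{b_0,b_0,u\}=0$ and $b_0\in N_\phi$.

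For the reverse inclusion I would take $b\in N_\phi$ and split it along its Peirce decomposition $b=b_2+b_1+b_0$ relative to $u$. Applying the identity $(\ref{equation positive element})$ with $a=b$ gives $P^1(u)\{b,b,u\}=\{b_2,b_2,u\}+\{b_1,b_1,u\}$, and combining this with $\phi=\phi P^1(u)$ produces the key splitting
\begin{equation*}
\phi\{b,b,u\}=\phi\{b_2,b_2,u\}+\phi\{b_1,b_1,u\}.
\end{equation*}
By Lemma \ref{lemma1js}(2) each summand is non-negative, so the hypothesis $\phi\{b,b,u\}=0$ forces $\phi\{b_2,b_2,u\}=0$ and $\phi\{b_1,b_1,u\}=0$ separately.

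The crux — and the step where minimality is genuinely used — is to deduce $b_1=b_2=0$ from these two vanishings. For $j\in\{1,2\}$, the identity $(\ref{equation positive element})$ shows that $\{b_j,b_j,u\}=P^1(u)\{b_j,b_j,u\}$ is a positive element of $(B'')^1(u)=\mathbb{R}u$, say $\{b_j,b_j,u\}=\lambda_j u$ with $\lambda_j\geq0$; evaluating $\phi$ and using $\phi(u)=1$ gives $\lambda_j=\phi\{b_j,b_j,u\}=0$, whence $\{b_j,b_j,u\}=0$. By the orthogonality characterisation this is equivalent to $D(u)(b_j)=\{u,u,b_j\}=0$, and since the defining formulas of the Peirce projections force $D(u)$ to act as $\tfrac12\,\mathrm{id}$ on $B''_1(u)$ and as $\mathrm{id}$ on $B''_2(u)$, we conclude $b_1=0$ and $b_2=0$. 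Thus $b=b_0\in P_0(u)(B'')$, giving $N_\phi\subseteq P_0(u)(B'')$ and, together with the first inclusion, the claimed equality. (If one prefers, the Cauchy--Schwarz inequality of Lemma \ref{lemma1js}(3) shows independently that $N_\phi$ is a linear subspace, but the splitting above makes this unnecessary.)
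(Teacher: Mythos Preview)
Your proof is correct and follows essentially the same route as the paper's: both arguments rest on identity $(\ref{equation positive element})$, the minimality of $u$ (so that $(B'')^1(u)=\mathbb{R}u$), and the resulting faithfulness of $\phi$ on $(B'')^1(u)$ to force $\{b_2,b_2,u\}=\{b_1,b_1,u\}=0$ and hence $b_1=b_2=0$. The paper compresses all of this into one line invoking faithfulness, whereas you unpack the Peirce splitting, the positivity of each summand, and the eigenvalue computation for $D(u)$ explicitly; but the underlying mechanism is identical.
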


\begin{proof}
By $(\ref{equation positive element})$ and the faithfulness of $\phi|_{(B'')^1 (u)}$,
we have $\{b,b,u\}=0$, that is, $b\perp u$, and hence $b\in P_0(u)(B'')$.
\end{proof}

A (real) linear functional $\varphi$ on a real C$^*$-algebra $A$ is said to be positive when it is hermitian and maps positive elements in $A$ into $\mathbb{R}^{+}_0$ (i.e. $\varphi(a )\geq 0$, for every positive element $a $ in $A$, and $\varphi (b^*) = \varphi (b)$, for every $b$ in $A$).
Following standard notation, given a real C$^*$-algebra $A$, the quasi-state space of $A$,
$\mathcal{Q}(A),$ is defined as the set of all positive $\varphi\in A'$ satisfying $\|\varphi\|\leq 1$ (see, for example \cite[\S 5.2]{li1}). It is known that $\mathcal{Q}(A)$ is a weak$^*$ closed convex subset of $(A')_1$. We shall require later some results on the facial structure of $\mathcal{Q} (A)$.
Having in mind that $A_{sa}$ is a JC-algebra with respect to the canonical Jordan product,
the following Lemma was proved in \cite[Theorem 5.2]{Ne00}.

Finally, let $A$ be a real or complex C$^*$-algebra or a JB-algebra. Following standard notation (see \cite[\S 3.11]{Ped} and \cite{Ed77}), we recall that a projection $p$ in the bidual, $A^{**}$, of $A$ is said to be open if $A^{**}_2 (p) \cap A$ is weak$^*$-dense in $A^{**}_{2}(p)$. The projection $p$ is called {closed} when $1 -p$ is open.

\begin{lemma}\label{l facial structure}{\rm \cite[Theorem 5.2]{Ne00}}
Let $\mathcal{Q}(A)$ be the quasi-state space of a real C$^*$-algebra $A$.
For every norm-closed face $F$ of $\mathcal{Q} (A)$ there exists a (unique) closed
projection $p\in A''$ satisfying that
$F=\{ \varphi\in \mathcal{Q} (A) : \varphi (1-p)=0\}.\hfill\Box$
\end{lemma}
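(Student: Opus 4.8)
The plan is to deduce the statement from the corresponding facial structure theorem for JB-algebras applied to the JB-algebra $A_{sa}$, since the quasi-state space of $A$ is governed entirely by its self-adjoint part. First I would record the elementary observation that every positive functional $\varphi\in A'$ is hermitian and therefore annihilates the skew-symmetric part of $A$: if $a^*=-a$, then $\varphi(a)=\varphi(a^*)=-\varphi(a)$, whence $\varphi(a)=0$. Consequently $\varphi$ is determined by its restriction to $A_{sa}$, and the restriction map $\varphi\mapsto\varphi|_{A_{sa}}$ is an affine weak$^*$-homeomorphism of $\mathcal{Q}(A)$ onto the quasi-state space $\mathcal{Q}(A_{sa})$ of the JB-algebra $A_{sa}$: positivity of $\varphi$ on $A$ is exactly positivity of $\varphi|_{A_{sa}}$ on the cone of positive elements $a^*a\in A_{sa}$, and any quasi-state of $A_{sa}$ extends, by zero on the skew part, to a positive functional of $A$ of the same norm. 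Under this identification, norm-closed faces of $\mathcal{Q}(A)$ correspond bijectively to norm-closed faces of $\mathcal{Q}(A_{sa})$.

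Next I would invoke the facial structure theorem for the quasi-state space of the JC-algebra $A_{sa}$ in the form cited as \cite[Theorem 5.2]{Ne00}: every norm-closed face $F_0$ of $\mathcal{Q}(A_{sa})$ has the form $\{\psi\in\mathcal{Q}(A_{sa}):\psi(1-p)=0\}$ for a unique closed projection $p$ in the JBW-algebra bidual $(A_{sa})^{**}$. To transport $p$ into $A''$, I would use the identification $(A_{sa})^{**}=(A'')_{sa}$ as JBW-algebras, valid because taking biduals commutes with the eigenspace decomposition of the weak$^*$-continuous involution, so that a projection of $(A_{sa})^{**}$ is precisely a projection of the real W$^*$-algebra $A''$. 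Tracing the face $F$ of $\mathcal{Q}(A)$ through the restriction homeomorphism to its image $F_0$ and back then yields $F=\{\varphi\in\mathcal{Q}(A):\varphi(1-p)=0\}$, with $p$ unique because distinct closed projections already determine distinct faces at the level of $A_{sa}$.

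The remaining point, and the one I expect to require the most care, is to check that the notion of \emph{closed projection} produced by the Jordan theorem agrees with the definition adopted here, namely that $1-p$ is open in the sense that $A''_2(1-p)\cap A$ is weak$^*$-dense in $A''_2(1-p)$. Here I would match the Peirce-$2$ space of $p$ computed in $A''$ with the one computed in $(A_{sa})^{**}$, using that $\big(A''_2(p)\big)_{sa}=(A_{sa})^{**}_2(p)$ and that $A\cap A''_2(p)$ sits self-adjointly inside $A_{sa}\cap(A_{sa})^{**}_2(p)$, so that the weak$^*$-density condition transfers between the two settings. Lemma \ref{1.4.7} and Lemma \ref{lemma3js} guarantee that the relevant suprema of support projections exist and behave as expected in the real W$^*$-algebra $A''$, which is what makes the lattice bookkeeping go through. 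Once this compatibility is secured, the real statement follows verbatim from the Jordan one; the only genuinely new input over the complex case is the bookkeeping observation that the skew part is invisible to positive functionals and that the real bidual identifications hold, so no new analytic difficulty arises beyond the cited theorem.
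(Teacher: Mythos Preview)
Your approach is exactly the one the paper indicates: the paper does not prove this lemma but simply observes that $A_{sa}$ is a JC-algebra and cites \cite[Theorem 5.2]{Ne00}, leaving the rest implicit. You have correctly spelled out the identification $\mathcal{Q}(A)\cong\mathcal{Q}(A_{sa})$ via restriction (using that positive functionals vanish on skew elements), the identification $(A_{sa})''=(A'')_{sa}$, and the compatibility of the two notions of closed projection.

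One remark: your invocation of Lemma~\ref{1.4.7} and Lemma~\ref{lemma3js} is out of place. Those lemmas concern suprema of families of projections in a real W$^*$-algebra and are used later in the paper (Theorem~\ref{theorem1iso}) to assemble a global structure projection from minimal ones; they play no role in the facial correspondence you are establishing here, which is a direct transfer of Neal's theorem and involves no lattice-theoretic bookkeeping. You should simply drop that sentence.
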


\section{Isometries between real C$^*$-algebras}
Our goal in this section is to show that any linear isometry
$T:A\rightarrow B$, surjective or not, between real C$^*$-algebras $A$
and $B$ reduces \emph{locally} to an ({isometric}) Jordan triple
isomorphism by a projection $p=u^*u\in B''$ for
some partial isometry $u\in B''$.

In our first result we shall study contractive linear projections
between real C$^*$-algebras. In the complex setting, C.-H. Chu and
N.-C. Wong proved in \cite[Proposition 2.2]{chu2} that for every
linear contraction $T$ between (complex) C$^*$-algebras $A$ and $B$,
there exists a largest projection $p\in B^{**}$ such that
\begin{enumerate}[$(a)$] \item $T\{a,a,a\} p= \{T(a),T(a),T(a)\} p$;
\item $p T(a)^* T(a) = T(a)^* T(a) p$ for every $a\in A$.
\end{enumerate} A standard complex polarisation argument implies
\begin{enumerate}[$(a)$] \item $T\{a,b,c\} p= \{T(a),T(b),T(c)\} p$ and $p T(a)^* T(b) = T(a)^* T(b) p$  for all $a,b,c$ in $A$;
\item $T(\cdot) p : A \to B^{**}$ is a triple homomorphism.
\end{enumerate}

In the real setting, the lacking of a standard polarisation identity forces us to make a
slight modification of the argument given by Chu and Wong in \cite[Proposition 2.2]{chu2},
an sketch of the proof is included here for completeness.

\begin{proposition}\label{p linear contractions} Let $T : A \to B$ be a linear contraction between real C$^*$-
algebras. Then there is a largest projection $p$ in $B''$ such that
\begin{enumerate}[$(a)$] \item $T\{a,b,c\} p= \{T(a),T(b),T(c)\} p$ and $p T(a)^* T(b) = T(a)^* T(b) p$  for all $a,b,c$ in $A$;
\item $T(\cdot) p : A \to B''$ is a triple homomorphism.
\end{enumerate} Furthermore, the projection $p$ is closed.
\end{proposition}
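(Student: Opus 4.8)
The plan is to adapt the complex argument of Chu and Wong from \cite[Proposition 2.2]{chu2}, working around the absence of a real polarisation identity. First I would note that, since $T:A\to B$ is a contraction, its bidual $T'':A''\to B''$ is a weak$^*$-continuous contraction extending $T$, and it suffices to produce the projection $p$ in $B''$ using the weak$^*$-topology and normal functionals. For each extreme point $\phi$ of the closed unit ball $B'_1$ with support tripotent $u_\phi\in U(B'')$, the pullback $\phi\circ T$ lies in the closed unit ball of $A'$, and I would consider its associated positive form on $A$ together with the behaviour of $T$ relative to the Peirce-$2$ space of $u_\phi$. The key device, exactly as in the complex case, is to compare the two nonnegative quantities $\phi\{T(a),T(a),u_\phi\}$ and $\phi\{u_\phi,\{T(a),T(a),u_\phi\},u_\phi\}$ using Lemma \ref{lemma1js}, whose Cauchy--Schwarz inequality $(3)$ and positivity $(2)$ hold verbatim in the real setting.

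Next I would carry out the following steps in order. \emph{Step 1:} for a fixed extreme point $\phi$, I would use the contractivity of $T$ and Lemma \ref{lemma1js} to show that the set of $a$ for which $T(a)$ "degenerates" relative to $u_\phi$ is controlled; concretely, I would produce a closed projection $p_\phi\in B''$ attached to $\phi$ such that $T(\cdot)p_\phi$ behaves multiplicatively at $\phi$, identifying $p_\phi$ via Lemma \ref{p0space} with the Peirce decomposition of $u_\phi$. \emph{Step 2:} I would take the supremum $p=\bigvee_\phi p_\phi$ over all relevant extreme points, invoking Lemma \ref{1.4.7} for the existence of this supremum in the real W$^*$-algebra $B''$ and Lemma \ref{lemma3js} to pass annihilation statements from the individual $p_\phi$ to their join. \emph{Step 3:} to upgrade from the single-variable identity $T\{a,a,a\}p=\{T(a),T(a),T(a)\}p$ to the full trilinear identity $(a)$, I would replace the complex polarisation by the standard real linearisation over self-adjoint/symmetric combinations, exploiting that the triple product \eqref{eq triple product in C*-algebras} is symmetric in the outer variables and that both sides agree on enough elements after applying $p$ on the right; the commutation relation $pT(a)^*T(b)=T(a)^*T(b)p$ follows from the same extreme-point analysis applied to $T(a)^*T(b)$.

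For the final assertion that $p$ is closed, I would appeal to Lemma \ref{l facial structure}: the projection $p$ is constructed as the support projection of a norm-closed face of the quasi-state space $\mathcal{Q}(B)$ (or its complement), and by that lemma such support projections of closed faces are exactly the closed projections, so $p$ inherits closedness automatically. The maximality (largest such $p$) follows because any projection $q$ satisfying $(a)$ and $(b)$ must, at each extreme point $\phi$, be dominated by the locally defined $p_\phi$, whence $q\le p$ by the supremum construction.

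I expect the main obstacle to be \emph{Step 3}, the replacement of complex polarisation. In the complex case one recovers the full sesquilinear/trilinear identity instantly from the diagonal $a=b=c$ case by polarising with the fourth roots of unity; over $\RR$ no such identity is available, so I would instead linearise using only real coefficients (replacing $a$ by $a+b$, $a+c$, etc.) and then separately handle the terms that mix the involution, carefully checking that applying $p$ on the right kills precisely the obstructive cross-terms. This is exactly the "slight modification" the authors advertise, and verifying that the real linearisation genuinely closes up without a hidden use of $i$ is where the real care is required.
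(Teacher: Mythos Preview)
Your Step 3 contains a genuine gap that cannot be patched by ``careful checking''. Over $\RR$ the triple product $\{a,b,c\}=\tfrac12(ab^*c+cb^*a)$ is real-trilinear and symmetric in the outer variables only. Real polarisation of the cubic form $a\mapsto\{a,a,a\}$ therefore recovers only the fully symmetrised trilinear form
\[
\tfrac13\big(\{a,b,c\}+\{a,c,b\}+\{b,a,c\}\big),
\]
not $\{a,b,c\}$ itself; there is no way to separate the middle variable by substitutions $a\mapsto a+b$, $a\mapsto a+c$, etc. So even if you establish $T\{a,a,a\}p=\{T(a),T(a),T(a)\}p$, you cannot linearise to the full identity in $(a)$. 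This is precisely why the paper does \emph{not} follow the route you sketch: instead of starting from the cubic diagonal and polarising, the paper builds the face directly from the two-variable expression $\Theta_{a,b}=T(Q(a)b)-Q(T(a))T(b)$, defining
\[
F_1=\bigcap_{a,b\in A_1}\{\varphi\in\mathcal{Q}(B):\|\Theta_{a,b}\|_\varphi=0\},
\]
and then refines inductively via the affine maps $\Phi_{a,b}(\varphi)(\cdot)=\varphi(T(b)^*T(a)\,\cdot\,T(a)^*T(b))$ to force the commutation relation. Because $Q(a)b$ already isolates the middle variable, only the harmless outer-variable polarisation $Q(a+c)b=Q(a)b+2\{a,b,c\}+Q(c)b$ is needed, and that works over $\RR$.

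Your overall architecture also diverges from the paper and creates secondary problems. The paper never introduces per-extreme-point projections $p_\phi$ nor takes a supremum; it obtains $p$ in one stroke as the closed projection corresponding, via Lemma~\ref{l facial structure}, to the weak$^*$-closed face $F=\bigcap_nF_n\subseteq\mathcal{Q}(B)$. In your scheme the supremum of closed projections need not be closed, so your closedness argument would fail; and your maximality argument is backwards (a global $q$ satisfying $(a)$--$(b)$ has no reason to be dominated by each small local $p_\phi$). In the paper, maximality follows because any such $q$ yields a norm-closed face $F(q)=\{\varphi\in\mathcal{Q}(B):\varphi(q)=\|\varphi\|\}$ which one checks is contained in $F_1$ (and, by the recursion, in every $F_n$), whence $F(q)\subseteq F$ and $q\le p$.
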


\begin{proof} Let us define $$ F_1:=\bigcap_{a,b\in A_1} \Big\{ \varphi \in \mathcal{Q} (B) : \|T(Q(a)b)- Q(T(a))(T(b))\|_{\varphi}=0 \Big\}$$ $$= \bigcap_{a,b\in A_1} \Big\{ \varphi \in \mathcal{Q} (B) : {\varphi} \Big((\Theta_{a,b}) (\Theta_{a,b})^* + (\Theta_{a,b})^* (\Theta_{a,b}) \Big)=0 \Big\},$$ where, for each $a,b\in A,$ $\Theta_{a,b} = T(Q(a)b)- Q(T(a))(T(b)).$ Clearly, $F_1$ is a weak$^*$ closed face of $\mathcal{Q} (B)$ containing zero. Given $a,b\in A_1,$ we define a continuous affine mapping $\Phi_{a,b} : \mathcal{Q} (B) \to \mathcal{Q} (B),$ $\varphi\mapsto \Phi_{a,b} (\varphi) (\cdot) := \varphi (T(b)^* T(a) \cdot T(a)^* T(b)).$ For each natural $n\geq 2$, $$F_n := \bigcap_{a,b\in A_1} \Big(F_{n-1} \bigcap \Phi_{a,b}^{-1} (F_{n-1}) \Big)$$ is a weak$^*$ closed face of $\mathcal{Q} (B)$ containing zero and $F_{n} \subseteq F_{n-1}.$ Therefore $F= \bigcap_{n=1}^{\infty} F_n$ is a weak$^*$ closed face of $\mathcal{Q} (B)$ containing zero, and hence there exists a closed projection $p\in B''$ satisfying that $F= \{\varphi\in \mathcal{Q} (B) : \varphi (1-p)=0\}$ (compare Lemma \ref{l facial structure} or \cite[Theorem 5.2]{Ne00}). In particular, for each $a,b\in A$ and $\varphi \in F$ we have $$0=\Phi_{a,b} (\varphi) (1-p) = \varphi (T(b)^* T(a) (1-p) T(a)^* T(b))$$
$$= \varphi \Big(((1-p) T(a)^* T(b))^* ((1-p) T(a)^* T(b))\Big).$$

Since the support tripotent of $\varphi$, $s(\varphi)$, is a projection with $s(\varphi)\leq p\leq 1$, we have:
$$ \left\|(1-p) T(a)^* T(b)\right\|_{\varphi}^2 =\varphi \J {(1-p) T(a)^* T(b)}{(1-p) T(a)^* T(b)}{p} $$
$$=\varphi P_2 (p) \J {(1-p) T(a)^* T(b)}{(1-p) T(a)^* T(b)}{p} $$
$$=\varphi P^1 (p) \J {(1-p) T(a)^* T(b)}{(1-p) T(a)^* T(b)}{p} $$
$$= \frac12 \varphi \Big(((1-p) T(a)^* T(b)) ((1-p) T(a)^* T(b))^* p \Big)$$ $$+  \frac12 \varphi \Big(p ((1-p) T(a)^* T(b))^* ((1-p) T(a)^* T(b))\Big)$$
$$= \frac12 \varphi \Big( ((1-p) T(a)^* T(b))^* ((1-p) T(a)^* T(b))\Big) =0$$ for every $a,b\in A$ and $\varphi \in F$. Observing that $$P_2 (p) \J {(1-p) T(a)^* T(b)}{(1-p) T(a)^* T(b)}{p}$$ $$ = P^1 (p) \J {(1-p) T(a)^* T(b)}{(1-p) T(a)^* T(b)}{p}$$ is a positive element in $(B'')^1 (p)$ (see $(\ref{equation positive element})$) and $F= \{\varphi\in B' : \varphi (p) = \varphi (1) = \|\varphi\|\leq 1\}$ is the normal
quasi-state of the JBW-algebra $(B'')^1 (p)$, we have $P_2 (p) \J {(1-p) T(a)^* T(b)}{(1-p) T(a)^* T(b)}{p}= 0$ for every $a,b\in A$. This implies that
$$ p \Big((1-p) T(a)^* T(b) T(b)^* T(a) (1-p) + p T(b)^* T(a) (1-p) T(a)^* T(b)  \Big)^* p=0,$$ or equivalently, $$\Big((1-p) T(a)^* T(b) p\Big)^* \Big((1-p) T(a)^* T(b) p\Big)= p T(b)^* T(a) (1-p) T(a)^* T(b) p=0,$$ and hence \begin{equation}\label{eq 1 in prop linear contraction}
{(1-p) T(a)^* T(b)} p =  0
\end{equation} for every $a,b\in A$, which proves the second statement in $(a)$. By definition of $F$, for each $a,b\in A$ and $\varphi \in F$, we have $${\varphi} \Big((\Theta_{a,b}) (\Theta_{a,b})^* + (\Theta_{a,b})^* (\Theta_{a,b}) \Big)=0,$$ which gives $$0= {\varphi} P_2(p)\Big((\Theta_{a,b})^* (\Theta_{a,b}) \Big) = {\varphi} P^1(p)\Big((\Theta_{a,b})^* (\Theta_{a,b}) \Big).$$ Recalling that $F$ coincides with the normal
quasi-state of the JBW-algebra $(B'')^1 (p)$ and $P^1(p)\Big((\Theta_{a,b})^* (\Theta_{a,b}) \Big) = p (\Theta_{a,b})^* (\Theta_{a,b})p = (\Theta_{a,b} \ p)^* (\Theta_{a,b}\ p) $ is a positive element in  $(B'')^1 (p)$, we deduce that $\Theta_{a,b} \ p =0$, and accordingly $$T(\J aba) p = \J {T(a)}{T(b)}{T(a)} p.$$ On the other hand, $$\J {T(a)p}{T(b)p}{T(a)p} = T(a)pT(b)^*T(a)p$$ $$= (\hbox{by } (\ref{eq 1 in prop linear contraction})) = T(a)T(b)^*T(a)p= \J {T(a)}{T(b)}{T(a)} p,$$ and we establish $(b)$.

Finally, when $q$ is a another projection in $B''$ satisfying $(a)$ and $(b)$, it follows straightforwardly that $F(q) := \{\varphi \in \mathcal{Q} (B) : \varphi (q)  = \|\varphi\|\leq 1\}$ is a norm-closed face contained in $F_1$ and hence $F(q) \subseteq F$. It is known that, under these conditions, $q\leq p$ (see, for example \cite[Theorem 2.2]{Ne00}).
\end{proof}


The projection $p\in B''$ given by Proposition \ref{p linear contractions} is called the \emph{structure projection}
of $T$ and is denoted by $p_{T}$. Unfortunately, in some cases, the structure projection $p_T$ satisfies that $T(.) p_T$
reduces to zero, even under the hypothesis of $T$ being an isometry.
In the complex setting, Chu and Wong proved in \cite[Therorem 3.10]{chu2} that $T(.) p_T$ is an isometry
whenever $T: A \to B$ is an (complex) linear isometry between C$^*$-algebras with $A$ abelian. We shall see next
that the same statement is not valid in the setting of real C$^*$-algebras.

\begin{example}\label{examp real/complex type to ChuWong Thm 3.10}{\rm
Let $K$ be a compact Hausdorff space and let $C(K)_{\mathbb{R}}$
be the real abelian C$^*$-algebra of complex continuous functions on
$K$ regarded as a real Banach space. Let $C(K,M_2(\mathbb{R}))$ be the real
C$^*$-algebra of continuous $M_2(\mathbb{R})$-valued functions on $K$,
where $M_2(\mathbb{R})$ denotes the real C$^*$-algebra of $2\times2$
real matrices. \\ Define $T: C(K)_{\mathbb{R}}\rightarrow C(K,M_2(\mathbb{R}))$ by
\begin{equation*}
T(f) = \left(
\begin{array}{ccc}
\textrm{Re}(f) & \textrm{Im}(f) \\
0 & 0 \\
\end{array} \right)\;\;\;\;(f\in C(K)_{\mathbb{R}}).
\end{equation*}
Then $T$ is a real linear isometry. We have $T(f)^3 =T(f^3)$ for all $f\in C(K)_{\mathbb{R}}$,
but in general $T(\{f,g,h\})\neq\{T(f),T(g),T(h)\}$. We shall now show that
the structure projection of $T$ satisfies $p_{T} \leq \left(
              \begin{array}{cc}
                0 & 0 \\
                0 & 1 \\
              \end{array}
            \right)$, and in this case, $T(.) p_{T} =0$ (cf. \cite{apazoglou}).
Let $p$ be a projection satisfying the thesis of Proposition \ref{p
linear contractions}. Then
\begin{align*}
\{T(f),T(g),T(f)\}&=\left(\begin{array}{ccc}
\textrm{Re}(f) & \textrm{Im}(f)\\
0 & 0 \\
\end{array} \right)\left(\begin{array}{ccc}
\textrm{Re}(g) & 0 \\
\textrm{Im}(g) & 0 \\
\end{array} \right)\left(\begin{array}{ccc}
\textrm{Re}(f) & \textrm{Im}(f)\\
0 & 0 \\
\end{array} \right)\\
&=\left(\begin{array}{ccc}
\textrm{Re}^2(f)\textrm{Re}(g) & \textrm{Im}^2(f)\textrm{Im}(g)\\
+\textrm{Re}(f)\textrm{Im}(f)\textrm{Im}(g) & +\textrm{Re}(f)\textrm{Im}(f)\textrm{Re}(g)\\
0 & 0 \\
\end{array} \right)
\end{align*}
and
\begin{equation*}
T(\{f,g,f\})=\left(\begin{array}{ccc}
\textrm{Re}^2(f)\textrm{Re}(g) & \textrm{Im}^2(f)\textrm{Im}(g)\\
-\textrm{Im}^2(f)\textrm{Re}(g) & -\textrm{Re}^2(f)\textrm{Im}(g)\\
+2\textrm{Re}(f)\textrm{Im}(f)\textrm{Im}(g) & +2\textrm{Re}(f)\textrm{Im}(f)\textrm{Re}(g)\\
0 & 0 \\
\end{array} \right).
\end{equation*}
By assumptions:
\begin{align*}
&\left(\begin{array}{ccc}
\textrm{Re}^2(f)\textrm{Re}(g) & \textrm{Im}^2(f)\textrm{Im}(g)\\
+\textrm{Re}(f)\textrm{Im}(f)\textrm{Im}(g) & +\textrm{Re}(f)\textrm{Im}(f)\textrm{Re}(g)\\
0 & 0 \\
\end{array} \right)p\\
&=\left(\begin{array}{ccc}
\textrm{Re}^2(f)\textrm{Re}(g) & \textrm{Im}^2(f)\textrm{Im}(g)\\
-\textrm{Im}^2(f)\textrm{Re}(g) & -\textrm{Re}^2(f)\textrm{Im}(g)\\
+2\textrm{Re}(f)\textrm{Im}(f)\textrm{Im}(g) & +2\textrm{Re}(f)\textrm{Im}(f)\textrm{Re}(g)\\
0 & 0 \\
\end{array} \right)p
\end{align*}
which simplifies to
\begin{equation}\label{4.4.2b}
\left(\begin{array}{ccc}
\textrm{Re}(f)\textrm{Im}(f)\textrm{Im}(g) & \textrm{Re}(f)\textrm{Im}(f)\textrm{Re}(g)\\
-\textrm{Im}^2(f)\textrm{Re}(g) & -\textrm{Re}^2(f)\textrm{Im}(g)\\
0 & 0 \\
\end{array} \right)p=0.
\end{equation}
Let $f,g\in C(K)_{\mathbb{R}}$ be the constant functions $g(x)=1$ and $f(x)= \imath$.
Substituting $f$ and $g$ into (\ref{4.4.2b}) we
obtain
\begin{equation*}\label{4.4.3b}
\left(\begin{array}{ccc}
-1 & 0\\
0 & 0 \\
\end{array} \right) p(x) =0\qquad(x\in K),
\end{equation*}
which gives the desired statement.

According to the result established by Chu and Wong in \cite[Therorem 3.10]{chu2},
the mapping $T$ cannot be complexified to an
isometry on the complexification $C(K)_{\mathbb{R}}\oplus \imath C(K)_{\mathbb{R}}$.
Here is a direct argument: let $T_c$ denote the complexification of $T$.
\begin{align*}
T_c(g+if) &= T(g)+iT(f)\\
&=\left(
\begin{array}{ccc}
\textrm{Re}(g)+i\textrm{Re}(f) & \textrm{Im}(g)+i\textrm{Im}(f) \\
0 & 0 \\
\end{array} \right)\in C(K,M_2(\mathbb{C})).
\end{align*}
Let $g(x)=1$ and $f(x) = \imath$ be the constant functions considered before. Then
$\|g+if\|=2$  while $$ \|T_c(g+if)\| =\left\|\left(
\begin{array}{ccc}
1 & i  \\
0 & 0 \\
\end{array} \right)\right\|_{M_2(\mathbb{R})_c}=\sqrt{2}.$$
Therefore $T_c:C(K)_{\mathbb{R}} \bigoplus \imath C(K)_{\mathbb{R}} \rightarrow C(K,M_2(\mathbb{C}))$ is not an
isometry.
}
\end{example}

We shall see later, that the obstacle in the above example relies on the ``complex nature''
of $C(K)_{\mathbb{R}}$ and the real linearity of $T$.

Let $A$ be a real C$^*$-algebra, a \emph{``complex'' character} of
$A$ is a real linear homomorphism $\rho :A\to \mathbb{C}$. Since
every complex character $\rho: A\to \mathbb{C}$ admits a complex
linear extension to a character $\widetilde{\rho}: A_c \to
\mathbb{C}$, we may assume, via Gelfand theory, that $\rho$ is a
real linear $^*$-homomorphism. Every non-zero complex character of
$A$ is unital whenever $A$ has a unit element, or can be extended to
a unital complex character of the unitisation of $A$ otherwise.
Following standard notation, $\Omega(A)$ will denote the space of
all complex characters of $A$. We write $$\Omega (A)_{\mathbb{F}} :=
\{\rho \in \Omega(A) : \rho(A) = \mathbb{F}\},\hbox{ where }
\mathbb{F} = \mathbb{R} \hbox{ or }  \mathbb{C}.$$ The elements in
$\Omega (A)_{\mathbb{R}}$ (resp., in $\Omega (A)_{\mathbb{C}}$) are
called \emph{complex characters of real type} (resp., \emph{complex
characters of complex type}). When $A$ is abelian, we shall say that
$A$ is of \emph{real type} if $\Omega (A)_{\mathbb{C}} = \emptyset$,
and of \emph{complex type} if $\Omega (A)_{\mathbb{C}} = \emptyset$.
An abelian real C$^*$-algebra $A$ is of real type (resp., of complex
type) if and only if all non-zero {complex characters} of $A$ are
onto $\mathbb{R}$ (resp., onto $\mathbb{C}$) (compare \cite[Theorem
2.7.7]{li1}). In the more general setting of real commutative
J$^*$-algebras, real and complex types where studied by L.J. Bunce
and C.-H. Chu in \cite[\S 3]{BuChu}. When $A$ is an abelian complex
C$^*$-algebra, then the real C$^*$-algebra underlying $A$,
$A_{\mathbb{R}}$, is a real C$^*$-algebra of complex type. Given a
locally compact Hausdorff space $X$, the real C$^*$-algebra
$C_0(X,\mathbb{R})$ is of real type. Furthermore, it follows from
\cite[Lemma 3.4]{BuChu} that every abelian real C$^*$-algebra of
real type if of the form $C_0(X,\mathbb{R})$, for some locally
compact Hausdorff space $X$.

Our next result shows that the main result in \cite{chu2} can be also
proved when the domain is an abelian real C$^*$-algebra of real type.

Henceforth, the closed unit ball of a Banach space $E$
will be denoted by $E_1$ and the set of extreme points of a convex set $S$ will be denoted by
$\partial S$.


\begin{proposition}\label{p extreme points} Let $T: X \to Y$ be a (not necessarily surjective) linear isometry
between Banach spaces and let $E=T(X)$. For each $\psi \in \partial E^{\prime}_1,$ the set
$$\mathcal{Q}_{\psi} = \left\{ \phi \in \partial Y^{\prime}_1 : \phi|_{E} =\psi \right\}$$ is
non-empty, that is, there exists $\phi \in \partial Y^{\prime}_1$ satisfying $\psi=\phi|_E$ and
$T'(\phi)\in \partial X^{\prime}_1$.
\end{proposition}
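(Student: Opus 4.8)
The plan is to produce the extension by a Hahn--Banach argument combined with a Krein--Milman selection, and then to observe that the claim about $T'(\phi)$ is automatic. First I would introduce the fibre of norm-preserving extensions of $\psi$,
$$\mathcal{F}_\psi := \left\{ \phi \in Y'_1 : \phi|_E = \psi \right\}.$$
By the Hahn--Banach theorem this set is non-empty, and every $\phi \in \mathcal{F}_\psi$ satisfies $1 = \|\psi\| = \|\phi|_E\| \leq \|\phi\| \leq 1$, so $\|\phi\| = 1$. The set $\mathcal{F}_\psi$ is clearly convex, and it is weak$^*$-closed in $Y'_1$ since each constraint $\phi(e) = \psi(e)$ $(e\in E)$ is weak$^*$-continuous; being a weak$^*$-closed subset of the Banach--Alaoglu weak$^*$-compact ball $Y'_1$, it is itself weak$^*$-compact. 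Krein--Milman then supplies an extreme point $\phi$ of $\mathcal{F}_\psi$, and this $\phi$ is my candidate.

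The crucial point is to upgrade extremality of $\phi$ inside $\mathcal{F}_\psi$ to extremality in the full ball $Y'_1$. Suppose $\phi = \tfrac12(\phi_1 + \phi_2)$ with $\phi_1, \phi_2 \in Y'_1$. Restricting to $E$ gives $\psi = \tfrac12(\phi_1|_E + \phi_2|_E)$ with $\phi_1|_E, \phi_2|_E \in E'_1$; since $\psi \in \partial E'_1$, this forces $\phi_1|_E = \phi_2|_E = \psi$, so $\phi_1, \phi_2 \in \mathcal{F}_\psi$. Extremality of $\phi$ in $\mathcal{F}_\psi$ now gives $\phi_1 = \phi_2 = \phi$, hence $\phi \in \partial Y'_1$ with $\phi|_E = \psi$, as required.

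It remains to verify $T'(\phi) \in \partial X'_1$. Since $T$ is an isometry, $E = T(X)$ is a closed subspace and the corestriction $S := T : X \to E$ is a \emph{surjective} linear isometry; therefore its adjoint $S' : E' \to X'$ is also a surjective linear isometry, and, mapping the unit ball affinely and bijectively onto the unit ball, it carries $\partial E'_1$ onto $\partial X'_1$. Because $\phi|_E = \psi$, for every $x \in X$ we have $T'(\phi)(x) = \phi(T(x)) = \psi(S(x)) = S'(\psi)(x)$, whence $T'(\phi) = S'(\psi) \in \partial X'_1$.

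The only step demanding genuine care---the main obstacle---is the lifting of extremality from the fibre $\mathcal{F}_\psi$ to the whole ball $Y'_1$; everything else (Hahn--Banach, weak$^*$-compactness, Krein--Milman, and the behaviour of the adjoint of a surjective isometry) is routine. I would stress that no Jordan or $C^*$-structure intervenes: the statement is a purely Banach-space fact about linear isometries, which is precisely what makes it a scalar-independent tool usable verbatim in both the real and complex settings.
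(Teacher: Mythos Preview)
Your proof is correct and follows essentially the same route as the paper: Hahn--Banach gives $\mathcal{F}_\psi\neq\emptyset$, Krein--Milman produces an extreme point, and the adjoint of the surjective corestriction sends $\psi$ to an extreme point of $X'_1$. The only cosmetic difference is that the paper observes in one line that $\mathcal{F}_\psi$ is a \emph{face} of $Y'_1$ (so its extreme points are automatically extreme in $Y'_1$), whereas you verify this face property by hand in your ``upgrade'' step.
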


\begin{proof} Let $T':E'\rightarrow X'$ be
the dual map of the surjective isometry $T:X\rightarrow E$. Then
$T'$ is a linear surjective isometry. Let $\psi$ be an extreme
point of the unit ball, $E_1'$, of $E'$. By the Hahn-Banach theorem,
the set $$\mathcal{F}_{\psi}=\left\{ \phi\in Y^{\prime}_{1} : \phi|_{E} =
 \psi\right\}$$ is a non-empty weak$^*$ closed face of
$Y^{\prime}_1$, thus, by the Krein-Milman theorem, there exists
$\phi\in \mathcal{F}_{\psi} \bigcap \partial Y^{\prime}_1 = \mathcal{Q}_{\psi}$. By an abuse of notation, we also denote
by $T^{\prime} : E^{\prime} \to X^{\prime}$ the dual map of $T : X
\to Y$ since confusion is unlikely. One sees readily that
$T'(\phi) = T'(\psi)$. Since $\psi=\phi|_E$ is an extreme point in $E_1'$, $T'(\psi)$ is an extreme point in $X'_{1}$ because $T':E'\rightarrow X'$ is a surjective linear isometry.
\end{proof}

We show below that if a real C$^*$-algebra $A$ admits a complex
character of real type (i.e. $\Omega(A)_{\mathbb{R}}\neq \emptyset$)
then the mapping $T(\cdot) p_T$ is non-zero for every linear
isometry from $A$ into another C$^*$-algebra. Our next result is in
fact an appropriate real version of \cite[Proposition 2]{chu9} and
\cite[Proposition 4.3]{chu2}.

\begin{theorem}\label{t Chu Wong Prop 4.3} Let $T: A \to B$ be a (not necessarily surjective) linear isometry between real C$^*$-algebras. Suppose $\rho \in \Omega_{\mathbb{R}} (A)$, then there exists
$\phi\in \partial B'_1$ and a minimal partial isometry $u_\phi\in B''$ such that $T'(\phi) = \rho$,
$u_\phi$ is the support partial isometry of $\phi$, ${T''(x)} = \rho (x) {u_{\phi}} + P_0 (u_{\phi}) (T''(x)),$
$$\{u_{\phi},u_{\phi},\{T''(x),T''(y),T''(z)\}\} =
\{u_{\phi},u_{\phi},T''(\{x,y,z\})\},$$
and
$$ \{u_{\phi},\{T''(x),T''(y),T''(z)\},u_{\phi}\} =
\{u_{\phi},T''(\{x,y,z\}),u_{\phi}\},$$ for every $x,y,z\in A''.$ Moreover, the minimal projection $p_\phi=u_\phi^*u_\phi$ satisfies statements $(a)$ and $(b)$ in Proposition \ref{p linear contractions}
and the mapping $T(\cdot) p_{\phi}: A \to B''$ is a non-zero triple homomorphism. In particular,
$T(\cdot) p_{T}$ is non-zero.
\end{theorem}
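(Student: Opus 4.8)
The plan is to manufacture a single extreme functional $\phi$ whose support tripotent $u_\phi$ controls the whole statement, and then to show that $T''$ is rigidly aligned with $u_\phi$ in the direction detected by $\rho$.

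\emph{Producing $\phi$.} Since $\rho\in\Omega_{\RR}(A)$ is an $\RR$-valued $*$-homomorphism it is a pure state, hence an extreme point of $A'_1$. Put $X=A$, $Y=B$ and $E=T(A)$; the dual $T':E'\to A'$ of the surjective isometry $T:A\to E$ is a surjective linear isometry, so $\psi:=(T')^{-1}(\rho)$ lies in $\partial E'_1$. Applying Proposition \ref{p extreme points} to $\psi$ yields $\phi\in\partial B'_1$ with $\phi|_E=\psi$, and therefore $T'(\phi)=\phi\circ T=\psi\circ T=\rho$. As $\phi$ is extreme in $B'_1$, its support tripotent $u_\phi\in B''$ is a minimal tripotent (\cite[Corollary 2.1]{peralta_stacho}); by Lemma \ref{lemma2js} it is a minimal partial isometry and $p_\phi=u_\phi^*u_\phi$ is a minimal projection. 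I will also use the recorded facts $\phi=\phi P^1(u_\phi)=\phi P_2(u_\phi)$, $\phi(u_\phi)=1$ and $\phi(y)u_\phi=P^1(u_\phi)(y)$.

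\emph{The crucial identity (main obstacle).} Everything reduces to the diagonal equality
$$\phi\J{T''(x)}{T''(x)}{u_\phi}=\rho(x)^2\qquad(x\in A).$$
The inequality ``$\geq$'' is immediate: since $D(u_\phi)=P_2(u_\phi)+\tfrac12P_1(u_\phi)$ and $\phi=\phi P_2(u_\phi)$ we get $\phi\J{T''(x)}{u_\phi}{u_\phi}=\phi D(u_\phi)(T''(x))=\phi(T''(x))=\rho(x)$, and the Cauchy--Schwarz inequality of Lemma \ref{lemma1js}(3), together with $\phi\{u_\phi,u_\phi,u_\phi\}=\phi(u_\phi)=1$, gives $\rho(x)^2\leq\phi\J{T''(x)}{T''(x)}{u_\phi}$. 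The reverse inequality is the heart of the matter and is the real analogue of \cite[Proposition 2]{chu9} and \cite[Proposition 4.3]{chu2}: if $e\in A''$ denotes the support tripotent of $\rho$, which is a minimal projection, then multiplicativity of the character forces $\rho\J xxe=\rho(xx^*)=\rho(x)\overline{\rho(x)}=\rho(x)^2$, so the required bound is the real transition estimate $\phi\J{T''(x)}{T''(x)}{u_\phi}\leq\rho\J xxe$. I expect this to be the main difficulty: with no complex polarisation available one must run the Friedman--Russo argument through the real eigenspace splitting $B''=(B'')^1(u_\phi)\oplus(B'')^{-1}(u_\phi)\oplus(B'')^0(u_\phi)$ and the real positive semidefinite form of Lemma \ref{lemma1js}, exploiting that $T$ is isometric to transport the inequality from $A$ to $B''$.

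\emph{Peirce decomposition and the triple identities.} Granting the diagonal equality, I complete the square in the symmetric bilinear form $(a,b)\mapsto\phi\J ab{u_\phi}$: using $\phi\J{T''(x)}{T''(x)}{u_\phi}=\rho(x)^2$, $\phi\J{T''(x)}{u_\phi}{u_\phi}=\rho(x)$ and $\phi\{u_\phi,u_\phi,u_\phi\}=1$ one finds $\phi\J{T''(x)-\rho(x)u_\phi}{T''(x)-\rho(x)u_\phi}{u_\phi}=0$, so by Lemma \ref{p0space} the element $T''(x)-\rho(x)u_\phi$ lies in $N_\phi=P_0(u_\phi)(B'')$. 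Since $\rho(x)u_\phi\in P_2(u_\phi)(B'')$, uniqueness of the Peirce components gives $P_2(u_\phi)T''(x)=\rho(x)u_\phi$ and $P_1(u_\phi)T''(x)=0$, that is, $T''(x)=\rho(x)u_\phi+P_0(u_\phi)(T''(x))$; this first holds for $x\in A$ and then on all of $A''$ by weak$^*$-density and separate weak$^*$-continuity. The two triple identities follow by applying $D(u_\phi)$ and $Q(u_\phi)$ to $\J{T''(x)}{T''(y)}{T''(z)}$: substituting $T''(\cdot)=\rho(\cdot)u_\phi+(\text{term in }P_0(u_\phi))$ and invoking the Peirce calculus together with the orthogonality of $P_0(u_\phi)$ and $P_2(u_\phi)$, every mixed term is annihilated and only $\rho(x)\rho(y)\rho(z)\,u_\phi=\rho\{x,y,z\}\,u_\phi$ survives, which is simultaneously $\{u_\phi,u_\phi,T''\{x,y,z\}\}$ and $\{u_\phi,T''\{x,y,z\},u_\phi\}$.

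\emph{Consequences for the structure projection.} Writing $p=p_\phi=u_\phi^*u_\phi$, $q=u_\phi u_\phi^*$ and $P_0(u_\phi)(y)=(1-q)y(1-p)$, the decomposition yields $T''(x)p=\rho(x)u_\phi$ and $u_\phi^*T''(x)=\rho(x)p$. A direct computation with these two relations verifies conditions $(a)$ and $(b)$ of Proposition \ref{p linear contractions} for $p_\phi$, and shows that $T(\cdot)p_\phi=\rho(\cdot)u_\phi$ is a triple homomorphism, non-zero because $\rho\neq0$. Finally, as $p_T$ is the largest projection satisfying $(a)$ and $(b)$, we have $p_\phi\leq p_T$; since $T(x)p_\phi=T(x)p_Tp_\phi$, the non-vanishing of $T(\cdot)p_\phi$ forces $T(\cdot)p_T\neq0$.
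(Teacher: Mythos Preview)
Your outline correctly identifies the architecture of the argument and matches the paper through the production of $\phi$ and $u_\phi$, and your endgame (Peirce decomposition, triple identities, the verification that $T(\cdot)p_\phi=\rho(\cdot)u_\phi$ is a non-zero triple homomorphism bounded above by $p_T$) is fine once the ``crucial identity'' is in hand.

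The gap is precisely where you flag it. You prove $\phi\{T''(x),T''(x),u_\phi\}\geq\rho(x)^2$ by Cauchy--Schwarz, but for the reverse inequality you only say that one ``must run the Friedman--Russo argument'' to obtain a real transition estimate $\phi\{T''(x),T''(x),u_\phi\}\leq\rho\{x,x,e\}$, without actually doing so. That estimate is not available off the shelf in the real setting --- the paper's own Example shows that complexification fails and that the complex Chu--Mackey/Chu--Wong arguments cannot simply be quoted --- so this step is exactly the content that needs to be supplied. As written, your proposal has identified the obstacle but not removed it.

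The paper circumvents the transition estimate entirely by exploiting the specific structure of an $\mathbb{R}$-valued character: $\rho''$ has a minimal, reduced, \emph{central} support projection $p\in A''$, giving an $\ell^\infty$-splitting $A''=\ker(\rho'')\oplus^\infty\mathbb{R} p$. With this in hand the argument proceeds in three short moves. First, for $a\in A''$ with $\|a\|=1$ and $\rho(a)=1$, Cauchy--Schwarz is an equality, so $\phi\{T''(a),T''(a),u_\phi\}=1$, and then a direct expansion shows $T''(a)-u_\phi\in N_\phi$. Second --- and this is the point you are missing --- for $b$ with $\rho(b)=0$ and $\|b\|=1$, the $\ell^\infty$-splitting gives $\|p+b\|=1$; applying the first step to $p+b$ and to $p$ yields $T''(b)\in N_\phi$. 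Third, for arbitrary $c$ one writes $c=\rho(c)p+(c-\rho(c)p)$ and combines. This case split replaces the abstract transition inequality you invoke, and is where the hypothesis $\rho\in\Omega_{\mathbb{R}}(A)$ (as opposed to $\Omega_{\mathbb{C}}(A)$) is genuinely used: it is what makes $p$ reduced and central, hence what makes the $\ell^\infty$-decomposition available.
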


\begin{proof}
Let $\rho\in \Omega(A)_{\mathbb{R}}$, where $A$ is a (not necessarily abelian) real C$^*$-algebra. The map $\rho'': A'' \to \mathbb{R}$ is a surjective norm-one weak$^*$ continuous *-homomorphism. The set $K= \ker (\rho'')$ is a weak$^*$ closed ideal of $A''$, thus there exists a minimal and reduced central projection $p\in A''$ such that $A'' = K\oplus^{\infty} \mathbb{R} p$ and $K= A'' (1-p).$ In particular, $\rho\in \partial A'_1.$

If $T:A\to B$ is a linear isometry, taking $E=T(A),$ $T': E' \to A'$ is a surjective linear isometry, then there exists $\psi\in \partial E'_1$ such that $T'(\psi) = \rho$. Proposition \ref{p extreme points} assures the existence of $\phi\in \partial B'_1$ such that $T' (\phi) = T'(\psi) = \rho$. We observe that $(A'')^{-1} (p) = \{0\},$ $P^{1} (p) (x) = x p$ and  $T'' : A'' \to B''$ is a weak* continuous linear isometry.

By the remarks following Lemma \ref{lemma3js} (cf. \cite[Corollary 2.1]{peralta_stacho}), the minimal
tripotents of $B''$ are the support tripotents of the extreme points
of $B'_1$. Let $u_{\phi}\in B''$ be the support
tripotent of $\phi$. Then $\phi (.) = \phi Q(u_{\phi}) (.)$
and $P^{1} (u_{\phi}) (.) = \phi (.) u_{\phi}$ (cf. \cite[Lemma 2.7]{peralta_stacho}). Thus, $\phi T(x)
=T^{\prime} (\phi) (x) = \rho (x)$ implies
\begin{equation}\label{correct eq 3.1} P^{1} (u_{\phi}) T(x) = \phi(T(x)) u_{\phi} = \rho (x) u_{\phi},
\end{equation} for every $x\in A$. Since $T'': A''\to B''$ and $\rho\in A'$ are weak* continuous and $A_1$ is weak* dense in $A''_1$, we have \begin{equation}\label{correct eq 3.1b} P^{1} (u_{\phi}) T''(x) = \phi(T''(x)) u_{\phi} = \rho (x) u_{\phi},
\end{equation} for every $x\in A''$. In particular, $$P^{1} (u_{\phi}) T''(\{ x,y,z\} ) = \rho \J xyz u_{\phi} =
\rho(x) \rho(y) \rho(z) \{u_{\phi},u_{\phi},u_{\phi}\} $$ $$= \{P^{1}
(u_{\phi}) T''(x),P^{1} (u_{\phi}) T''(y),P^{1} (u_{\phi}) T''(z)\},$$ for
every $x,y,z\in A''$, that is, $P^{1} (u_{\phi}) T'': A'' \to
B''$ is a non-zero triple homomorphism.

We shall show that $\{u_\phi,u_\phi,T(a)\}= u_\phi$ for every $a\in A''$
satisfying $\|a\|=1$ and $P^1(p) (a)= p$. By $(\ref{correct eq 3.1b})$ and the Cauchy-Schwarz
inequality (cf. Lemma \ref{lemma1js}), we have
\begin{align*}
1&=\|P^1 (p) (a) \|= |\phi\circ T''(a)|^2=|\phi(\{u_\phi,T''(a),u_\phi\})|^2\\
&\leq \phi(\{u_\phi,u_\phi,u_\phi\})\phi(\{T''(a),T''(a),u_\phi\})\\
&\leq\|T''(a)\|^2=\|a\|^2=1
\end{align*}
which implies \begin{equation*}\phi(\{T''(a),T''(a),u_\phi\})=1.\end{equation*}
Let $N_\phi=\{b\in B''\;:\;\phi(\{b,b,u_\phi\})=0\}$. By Lemma \ref{p0space},
\begin{equation}\label{nphi}
N_\phi=P_0(u_\phi)(B'').
\end{equation}

Now we claim that $T''(a)-u_\phi\in N_\phi$, for any $a$ as above.
Indeed, taking into account that, by Lemma
\ref{lemma1js},  $\phi(\{T''(a),u_\phi,u_\phi\})=\phi(\{u_\phi,T''(a),u_\phi\})$, we have
\begin{align*}
&\phi(\{T''(a)-u_\phi,T''(a)-u_\phi,u_\phi\})\\
&=\phi(\{T''(a),T''(a),u_\phi\})-2 \phi(\{ u_\phi,T''(a),u_\phi\})+\phi(u_\phi)\\
&=\phi(\{T''(a),T''(a),u_\phi\})-2 \phi(T''(a))+\phi(u_\phi)\\
&=1-2\rho (a)+1=0.
\end{align*}
Therefore, by (\ref{nphi}), $T''(a)- u_\phi\in
P_0(u_\phi)(B'')$ and then \begin{equation}
\label{eq new theorem 1} \{u_\phi,u_\phi,T''(a)\}=u_\phi,
\end{equation} for every $a$ as above.

In the next step we prove that $\phi(\{T''(b),T''(b),u_\phi\})=0$ whenever $b\in A''$
satisfies $P^1 (p) (b)=0$. Without loss of generality we can take
$\|b\|=1$. Since $A'' = A''(1-p) \oplus^\infty \mathbb{R} p$, $p$ and $b$ are orthogonal in $A''$,
$\|p+b\| = 1$ and $P^1 (p) (b+p)= p$. We deduce, from the above arguments, that $T''(b+p)+
N_\phi=u_\phi+N_\phi=T''(p)+N_\phi,$ which implies $T''(b)\in N_\phi$ and
$\phi(\{T''(b),T''(b),u_\phi\})=0$.

Now, let $c\in A''$ with $\|c\|=1$. Then $P^1 (p) (c-\rho (c) p)=0$
and, by the arguments of the previous paragraph, we have
$T''(c-\rho (c) p)\in N_\phi$. That is, $\{u_\phi,u_\phi,T''(c-\rho (c) p)\}=0$ and hence, by $(\ref{eq new theorem 1})$,
\begin{equation}\label{4.3.4}
\{u_\phi,u_\phi,T''(c)\}=\rho(c) \{u_\phi,u_\phi,T''(p)\}=  \rho(c) u_\phi.
\end{equation} Left multiply by $u_\phi u_\phi^*$, right multiply by $u_\phi^* u_\phi$ and subtracting, we have $u_\phi u_\phi^* T''(c) = T''(c) u_\phi^* u_\phi = u_\phi u_\phi^* T''(c) u_\phi^* u_\phi$, for every $c\in A''$.

Therefore \begin{equation} \label{claim 4} {T''(c)} = \rho (c)
{u_{\phi}} + P_0 (u_{\phi}) (T''(c)),
\end{equation} for every $c\in A''$. It follows that $$T''(\{x,y,z\}) =  \rho \{x,y,z\} \ u_{\phi} + P_0 (u_{\phi}) (T''(\{x,y,z\} )),$$ and by Peirce rules $$\{T''(x),T''(y),T''(z)\} =$$ $$= \rho (x) \rho (y) \rho (z) u_{\phi} + \{ {P_0 (u_{\phi}) (T''(x))},{P_0 (u_{\phi}) (T''(y))},{P_0 (u_{\phi})
(T''(z))}\}$$
$$= \rho \{ x,y,z \} u_{\phi} + \{ {P_0 (u_{\phi}) (T''(x))},{P_0 (u_{\phi}) (T''(y))},{P_0 (u_{\phi})
(T''(z))}\} ,$$
 which assures that
\begin{equation}
\label{eq old 3.13} \{u_{\phi},u_{\phi},\{T''(x),T''(y),T''(z)\}\} =
\{u_{\phi},u_{\phi},T''(\{x,y,z\})\},
\end{equation}
and
\begin{equation}
\label{eq old 3.13b} \{u_{\phi},\{T''(x),T''(y),T''(z)\},u_{\phi}\} =
\{u_{\phi},T''(\{x,y,z\}),u_{\phi}\},
\end{equation} for every $x,y,z\in A''.$

Let $p_\phi=u_\phi^*u_\phi$ and $q_\phi=u_\phi u_\phi^*$ be the
initial and final minimal projections of the minimal tripotent
$u_\phi$ (see Lemma \ref{lemma2js}). We then have
$$(\{T''(x),T''(y),T''(z)\} - T''(\{x,y,z\}))p_\phi+$$
$$q_\phi(\{T''(x),T''(y),T''(z)\}-T''(\{x,y,z\})) = 0,$$ and
$$u_{\phi} (\{T''(x),T''(y),T''(z)\}-T''(\{x,y,z\}))^* u_\phi=0,$$
and consequently:
\begin{equation*}
q_{\phi} (\{T''(x),T''(y),T''(z)\}-T''(\{x,y,z\})) p_\phi=0,
\end{equation*}
and
\begin{equation}\label{4.3.13}
(\{T''(x),T''(y),T''(z)\}-T''(\{x,y,z\})) p_\phi=0,
\end{equation} witnessing that $T''(\cdot) p_{\phi} : A \to B''$ is a non-zero
triple homomorphism (observe that $T''(\cdot) p_{\phi} =\rho (\cdot) u_{\phi} \neq 0$).
Proposition \ref{p linear contractions} implies that $p\leq p_T$ and $T(\cdot) p_{T} \neq 0$.
\end{proof}

Suppose $A$ is an abelian real C$^*$-algebra of complex type, that is, $\Omega(A)_{\mathbb{R}}= \emptyset$. In general, $A$ need not be C$^*$-isomorphic to $C_0(X)_{\mathbb{R}}$ (compare, for example, \cite[Remark 3.8]{BuChu}). However, by \cite[Theorem 3.7]{BuChu}, $A''$ is isometric and C$^*$-isomorphic to $C(\Omega)_{\mathbb{R}}$, for some compact hyperstonean space $\Omega$. Let $J_A : A \hookrightarrow A''$ denote the canonical inclusion of $A$ into $A''$, then $J_A$ is an isometric C$^*$-embedding. Let $T: A''=C(\Omega)_{\mathbb{R}}\rightarrow C(\Omega,M_2(\mathbb{R}))$
be the isometry given in Example \ref{examp real/complex type to ChuWong Thm 3.10}. That is
\begin{equation*}
T(f) = \left(
\begin{array}{ccc}
\textrm{Re}(f) & \textrm{Im}(f) \\
0 & 0 \\
\end{array} \right)\;\;\;\;(f\in C(\Omega)_{\mathbb{R}}).
\end{equation*} The mapping $S= T J_A : A \to C(\Omega,M_2(\mathbb{R}))$ is an isometry satisfying $S(.) p_S =0$.

When $T$ is an isometry from an abelian real C$^*$-algebra of
real type to another real C$^*$-algebra, we can prove that $T(.) p_{T}$ is an isometry.

\begin{theorem}\label{theorem1iso}
Let $B$ be a real C$^*$-algebra and $T:C_0(X,\mathbb{R})\rightarrow B$
be a (not necessarily surjective) linear isometry. Let $p_T \in  B''$
be the structure projection of $T$. Then 
$T(.)p_T : C_0(X,\mathbb{R})\rightarrow B''$
is an isometric triple homomorphism.
\end{theorem}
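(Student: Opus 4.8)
The plan is to exploit the abundance of complex characters of real type on $C_0(X,\mathbb{R})$ together with the pointwise information furnished by Theorem \ref{t Chu Wong Prop 4.3}. Since $C_0(X,\mathbb{R})$ is abelian of real type, every non-zero complex character is onto $\mathbb{R}$; concretely, for each point $x\in X$ the evaluation $\rho_x(f)=f(x)$ lies in $\Omega(C_0(X,\mathbb{R}))_{\mathbb{R}}$, and the norm is recovered as $\|f\|=\sup_{x\in X}|\rho_x(f)|=\sup_{x\in X}|f(x)|$. This is precisely the feature absent in Example \ref{examp real/complex type to ChuWong Thm 3.10}, where the complex type of the domain forced $T(\cdot)p_T$ to vanish.

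First I would apply Theorem \ref{t Chu Wong Prop 4.3} to each $\rho_x$. This produces, for every $x\in X$, an extreme point $\phi_x\in\partial B'_1$ and a minimal partial isometry $u_x=u_{\phi_x}\in B''$ whose initial minimal projection $p_{\phi_x}=u_x^*u_x$ satisfies conditions $(a)$ and $(b)$ of Proposition \ref{p linear contractions}, together with the identity $T(f)\,p_{\phi_x}=\rho_x(f)\,u_x=f(x)\,u_x$ for all $f\in C_0(X,\mathbb{R})$. Because $p_T$ is, by Proposition \ref{p linear contractions}, the \emph{largest} projection satisfying $(a)$ and $(b)$, each $p_{\phi_x}$ is majorised by $p_T$, so that $p_T\,p_{\phi_x}=p_{\phi_x}$.

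The core of the argument is a two-sided norm estimate. For the lower bound, fix $f$ and $x$; since $u_x$ is a tripotent of norm one, $|f(x)|=\|f(x)u_x\|=\|T(f)\,p_{\phi_x}\|$. Writing $T(f)\,p_{\phi_x}=T(f)\,p_T\,p_{\phi_x}$ via $p_T\,p_{\phi_x}=p_{\phi_x}$ and using $\|p_{\phi_x}\|=1$, we obtain $|f(x)|\le\|T(f)\,p_T\|$ for every $x\in X$; taking the supremum over $X$ yields $\|f\|\le\|T(f)\,p_T\|$. The reverse inequality is immediate: $\|T(f)\,p_T\|\le\|T(f)\|\,\|p_T\|\le\|T(f)\|=\|f\|$, using that $T$ is an isometry and that $p_T$ is a projection of norm at most one. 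Hence $\|T(f)\,p_T\|=\|f\|$ and $T(\cdot)\,p_T$ is an isometry.

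Finally, since Proposition \ref{p linear contractions} already guarantees that $T(\cdot)\,p_T:C_0(X,\mathbb{R})\to B''$ is a triple homomorphism, combining this with the isometry just established shows that it is an isometric triple homomorphism. I expect the only genuinely delicate point to be the transfer of the pointwise norm identity $\|T(f)\,p_{\phi_x}\|=|f(x)|$ to the single global projection $p_T$ through the ordering $p_{\phi_x}\le p_T$; everything else reduces to the Gelfand description of the characters of $C_0(X,\mathbb{R})$ and a submultiplicativity estimate.
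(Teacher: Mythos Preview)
Your argument is correct and follows essentially the same strategy as the paper: apply Theorem~\ref{t Chu Wong Prop 4.3} at each point $x\in X$ to obtain minimal projections $p_{\phi_x}\le p_T$, and then transfer the pointwise norm identity $\|T(f)p_{\phi_x}\|=|f(x)|$ up to $p_T$ via the order relation. The only difference is that the paper inserts an intermediate projection $p=\bigvee_{x\in X}p_{\phi_x}$, first showing that $T(\cdot)p$ is an isometric triple homomorphism and then passing from $p$ to $p_T$ via $p\le p_T$; you bypass this by using $p_{\phi_x}\le p_T$ directly, which is a legitimate shortcut for the purposes of this theorem (the paper's supremum $p$ is introduced partly because it reappears as $u^*u$ in Theorem~\ref{theorem2iso} and Remark~\ref{r last}).
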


\begin{proof}
Let us denote $A=C_0(X,\mathbb{R})$ and $E=T(A)$ and let $T':E'\rightarrow A'$ be
the dual map of the surjective isometry $T:A\rightarrow E$. The real C$^*$-algebra
$A$ is of real type and $\Omega(A)_{\mathbb{R}} = \{ \delta_x : x\in X\}$.
By Proposition \ref{p extreme points}, the set
$$\mathcal{Q} = \left\{ \phi \in \partial B^{\prime}_1 : \phi|_{E} \in \Omega(A)_{\mathbb{R}}\right\}$$ is
non-empty. By Theorem \ref{t Chu Wong Prop 4.3}, for each $x\in X$, there exists $\phi_x\in \mathcal{Q}$ and a minimal partial isometry $u_x\in B''$ such that $T'(\phi_x) = \delta_x$,
$u_x$ is the support partial isometry of $\phi_x$, \begin{equation}\label{4.3.13b} {T(f)} = \delta_x (f) {u_{x}} + P_0 (u_{x}) (T(f)),
\end{equation} $$
(\{T(f),T(g),T(h)\}-T(\{f,g,h\})) p_x=0,$$ and $$p_x T(f)^* T(g) = T(f)^* T(g) p_x,$$
for every $f,g,h\in A$, where $p_x=u_x^* u_x$ is a minimal projection in $B''$.
Let $p=\bigvee_{x\in X} p_x$ be the lattice supremum in $B''$.
By Lemma \ref{lemma3js} and (\ref{4.3.13b}), we obtain
\begin{equation}\label{secondresult}
\{T(f),T(g),T(h)\}p=T(\{f,g,h\})p \hbox{ and } p T(f)^* T(g) = T(f)^* T(g) p,
\end{equation} for every $f,g,h\in A$. Thus,
$T(\cdot) p : A \to B''$ is a triple homomorphism. Proposition \ref{p linear contractions}
implies $p\leq p_T.$

We claim that $T(\cdot) p$ is an isometry. Indeed, for each $f\in A$,
we can pick $x_0\in X$ with $\|f\|=|f(x_0)|$. Let $\phi_{x_0}\in \mathcal{Q}$
be such that $T'(\phi_{x_0})=\delta_{x_0}$. In this case,
$$\|f\|=\|T(f)\| \geq\|T(f)p\|\geq\|T(f)pp_{x_0}\|=\|T(f)p_{x_0}\|$$
$$=\|T(f)u_{x_0}^*u_{x_0}\| \geq\|u_{x_0} u_{x_0}^*T(f)u_{x_0}^*u_{x_0}\|$$ $$=\|\{u_{x_0},\{u_{x_0},T(f),u_{x_0}\},u_{x_0}\}\|
=\|f(x_{x_0})u_{x_0}\|=\|f\|,$$ which implies $\|T(f)p\|=\|f\|$ for all $f\in A$.

Finally, since $p\leq p_T$ we have $$\|f\|= \|T(f) p \| = \|T(f) p_T p \|\leq \|T(f) p_T \|\leq \|T(f)\|= \|f\|,$$ which completes the proof.
\end{proof}

Let $E$ be a real or complex JBW$^*$-triple, two elements $\phi,\psi\in E_{_{'}}$ are said to be
\emph{orthogonal} (written $\phi\perp \psi$) if their support tripotents are orthogonal, that is, $u_\phi \perp u_{\psi}$. C.M. Edwards and G. Rüttimann proved in \cite[Theorem 5.4]{EdRu01} that in the setting of complex JBW$^*$-triples, $\phi$ and $\psi$ are orthogonal if, and only if, $\|\phi \pm \psi\|  = \|\phi\| + \|\psi\|$.

Suppose $E$ is a real JBW$^*$-triple whose complexification, $E_c$, is a (complex) JBW$^*$-triple.
We have already commented that there is a conjugate linear isometry
$\tau:E_c\rightarrow E_c$ of period 2 such that $E = E_c^{\tau} =\{b\in
E_c\::\:\tau(b)=b\}$. The ``dual'' map
$\widetilde{\tau}:E_c{^*}\rightarrow E_c^{*}$ defined by
\begin{equation*}
\widetilde{\tau}(\phi)(b)=\overline{\phi(\tau(b))}\qquad(\phi \in E_c^*,\,b\in E_c)
\end{equation*} is a conjugate linear isometry of period 2 and the mapping $$(E_c^*)^{\widetilde{\tau}} \to E'$$ $$\varphi \mapsto \varphi|_{E}$$ is a surjective linear isometry. Moreover, by \cite[Proposition 2.3]{MarPe}, $\tau$ is weak* continuous and hence $\widetilde{\tau}((E_c)_{*}) = (E_c)_{*}$ and the restricted mapping $$((E_c)_*)^{\widetilde{\tau}} \to E_{_{'}}$$ $$\varphi \mapsto \varphi|_{E}$$ is a surjective linear isometry. Take two elements $\phi,\psi\in E_{_{'}}$, then $\phi\perp \psi$ (in $E_{_{'}}$) if, and only if, $\phi\perp \psi$ as elements in $E_*$, therefore $\|\phi\pm \psi\| = \|\phi\| +\|\psi\|$ as elements in $E_*$, which is equivalent to $\|\phi\pm \psi\| = \|\phi\| +\|\psi\|$ as elements in $E_{_{'}}$.

Let $\mathcal{U} (E)$ and $\mathcal{U} (E_c)$ denote the sets of all
tripotents in $E$ and $E_c$, respectively. Let $\mathcal{U}
(E_c)^{\sim}$ denote the union of the set $\mathcal{U} (E_c)$ and a
one point set $\{\omega\}$ such that $e\leq \omega$ for every
$e\in\mathcal{U} (E_c)$. It is known that $\mathcal{U} (E_c)^{\sim}$
is a complete lattice and $\mathcal{U} (E)\cup \{\omega\}= \{ e\in
\mathcal{U} (E_c) : \tau (e) = e\}\cup \{\omega\}$ is a sub-complete
lattice of $\mathcal{U} (E_c)^{\sim}$. The supremum of a family
$(e_i) \subset \mathcal{U} (E_c)$ (resp., in $\mathcal{U} (E)$) need
not exist, in general, in $\mathcal{U} (E_c)$ (resp., in
$\mathcal{U} (E)$). However, for every family $(e_i)$ of mutually
orthogonal elements in $\mathcal{U} (E_c)$ (resp., in $\mathcal{U}
(E)$), the supremum $\bigvee_{i} e_i$ exists in $\mathcal{U} (E_c)$
or in (resp., in $\mathcal{U} (E)$) (cf. \cite[Theorem 5.1]{EdRu01}
and \cite{edwards_ruttimann}). Moreover, suppose that an element
$a\in E$ satisfies $a \perp e_i$ for every $i$, then $a\perp
\bigvee_{i} e_i$.

The next lemma subsumes some of the above results.

\begin{lemma}\label{l supremum} Let $E$ be a real JBW$^*$-triple. The following statements hold:
\begin{enumerate}[$(a)$] \item Let $\phi$ and $\psi$ be two elements in $E_{_{'}}$, then $\phi\perp \psi$ if, and only if, $\|\phi\pm \psi\| = \|\phi\| +\|\psi\|$.
\item Let $(e_i)$ be a family of mutually orthogonal elements in $\mathcal{U} (E)$. Then the supremum $\bigvee_{i} e_i$ exists in $\mathcal{U} (E)$. Moreover, suppose $a\in E$ satisfies $a\perp e_i$, for every $i$, then $\bigvee_{i} e_i\perp a$. $\hfill\Box$
\end{enumerate}
\end{lemma}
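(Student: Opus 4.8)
The plan is to prove Lemma \ref{l supremum} by reducing both statements to the corresponding complex results via the standard real-form machinery already established in the preliminaries. The key structural fact is that $E$ sits inside its complexification $E_c$ as $E = E_c^\tau$, that the weak*-continuous conjugate-linear period-$2$ isometry $\tau$ induces $\widetilde{\tau}$ on $(E_c)_*$ with $\widetilde{\tau}((E_c)_*) = (E_c)_*$, and that the restriction map $((E_c)_*)^{\widetilde{\tau}} \to E_{_{'}}$, $\varphi \mapsto \varphi|_E$, is a surjective linear isometry. I would therefore carry the problem across this isometry, apply the complex theorems of Edwards and R\"uttimann, and carry the conclusion back.

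For part $(a)$, the argument is essentially transcribed from the discussion immediately preceding the lemma statement. First I would take $\phi,\psi\in E_{_{'}}$ and lift them to their (unique) $\widetilde{\tau}$-fixed preimages in $(E_c)_*$, still called $\phi,\psi$. The definition of orthogonality in a real JBW$^*$-triple is that the support tripotents $u_\phi, u_\psi$ (which lie in $\mathcal{U}(E) = \{e\in\mathcal{U}(E_c) : \tau(e)=e\}$) are orthogonal; since $\sigma = \tau^{**}$ sends the support tripotent of a functional to the support tripotent of its $\widetilde{\tau}$-image, orthogonality of $\phi,\psi$ in $E_{_{'}}$ is equivalent to their orthogonality as elements of $(E_c)_*$. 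By \cite[Theorem 5.4]{EdRu01}, the latter is equivalent to $\|\phi\pm\psi\| = \|\phi\|+\|\psi\|$ computed in $(E_c)_*$. Finally, because $((E_c)_*)^{\widetilde{\tau}} \to E_{_{'}}$ is an isometry, these norms coincide with the norms taken in $E_{_{'}}$, which closes the equivalence.

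For part $(b)$, I would again pass to $E_c$. Given a family $(e_i)$ of mutually orthogonal tripotents in $\mathcal{U}(E)$, they are in particular mutually orthogonal in $\mathcal{U}(E_c)$, so by \cite[Theorem 5.1]{EdRu01} the supremum $e := \bigvee_i e_i$ exists in $\mathcal{U}(E_c)$. The point to verify is that $e\in\mathcal{U}(E)$, i.e. $\tau(e) = e$. This follows because $\tau$ is a period-$2$ triple automorphism (a conjugate-linear isometry of the JBW$^*$-triple, hence order-preserving on $\mathcal{U}(E_c)^\sim$ by \cite[Proposition 2.3]{MarPe} and \cite{edwards_ruttimann}) fixing each $e_i$; applying $\tau$ to the defining supremum and using that $\tau$ preserves orthogonality and the lattice order gives $\tau(e) = \bigvee_i \tau(e_i) = \bigvee_i e_i = e$. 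Hence $e\in\mathcal{U}(E)$ is the supremum in the sub-complete lattice $\mathcal{U}(E)\cup\{\omega\}$, which is exactly the supremum in $\mathcal{U}(E)$. For the last assertion, if $a\in E$ satisfies $a\perp e_i$ for all $i$, then viewing $a\in E_c$ the final statement recalled before the lemma (valid in $E_c$, that an element orthogonal to every member of a mutually orthogonal family is orthogonal to its supremum) yields $a\perp e$.

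The main obstacle is the verification in $(b)$ that $\tau$ commutes with the formation of the supremum, i.e. that $\tau$ is genuinely a lattice automorphism of $\mathcal{U}(E_c)^\sim$. This requires knowing that $\tau$, being a surjective conjugate-linear isometry, is a triple automorphism and hence preserves the partial order $\leq$ and orthogonality on tripotents; once that is granted the supremum computation is forced. I expect everything else to be routine bookkeeping through the isometric identifications already recorded in the preliminaries.
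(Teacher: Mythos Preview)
Your proposal is correct and follows essentially the same route as the paper, which treats the lemma as an immediate consequence of the discussion preceding it. Part $(a)$ is exactly the transfer through the isometry $((E_c)_*)^{\widetilde{\tau}} \cong E_{_{'}}$ combined with \cite[Theorem 5.4]{EdRu01}; for part $(b)$ the paper simply records that $\mathcal{U}(E)\cup\{\omega\}$ is a sub-complete lattice of $\mathcal{U}(E_c)^\sim$ (citing \cite{edwards_ruttimann} and \cite[Theorem 5.1]{EdRu01}), while you spell out the underlying reason, namely that $\tau$ is an order automorphism fixing each $e_i$ and hence their supremum.
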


The next result reveals the connection between the real versions of Theorem 1 in
\cite{chu9} and Theorem 3.10 in \cite{chu2}. 

\begin{theorem}\label{theorem2iso}
Let $B$ be a real JB$^*$-triple and $T:C_0(X,\mathbb{R})\rightarrow B$
be a not necessarily surjective linear isometry. Then there exists
a partial isometry $u\in B''$ such that $${T(f)} = \delta_x (f) {u_{x}} + P_0 (u_{x}) (T(f)),$$
$$\{u,T(\{f,g,h\}),u\}=\{u,\{T(f),T(g),T(h)\},u\},$$
for every
$f,g,h\in C_0(X,\mathbb{R})$, and $\{u,T(\cdot),u\} :C(X,\mathbb{R})\rightarrow B''$ is an isometry.
\end{theorem}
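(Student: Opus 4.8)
The plan is to follow the scheme of the proof of Theorem \ref{theorem1iso}, replacing the right multiplication by the projection $p_x=u_x^*u_x$ (available there only because $B$ is a C$^*$-algebra) by the Peirce-two ``sandwich'' operator $Q(u_x)=\{u_x,\cdot,u_x\}$, and replacing the lattice supremum of projections by a supremum of \emph{mutually orthogonal} tripotents, whose existence is guaranteed by Lemma \ref{l supremum}$(b)$. Write $A=C_0(X,\mathbb{R})$, which is of real type with $\Omega(A)_{\mathbb{R}}=\{\delta_x:x\in X\}$, set $E=T(A)$, and recall that $T':E'\to A'$ is a surjective linear isometry. For each $x\in X$, Proposition \ref{p extreme points} yields $\phi_x\in\partial B'_1$ with $T'(\phi_x)=\delta_x$; let $u_x\in B''$ be its (minimal) support tripotent.

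Next I would produce the local data. The computations in the proof of Theorem \ref{t Chu Wong Prop 4.3} that lead to the decomposition of $T''$ use only the support-tripotent calculus (Lemma \ref{lemma1js}, Lemma \ref{p0space}) and the Peirce rules in the JBW$^*$-triple $B''$, none of which requires $B$ to be a C$^*$-algebra, so they carry over verbatim and give
$$T''(f)=\delta_x(f)\,u_x+P_0(u_x)(T''(f))\qquad(f\in A),$$
whence $Q(u_x)T''(f)=f(x)\,u_x$. Expanding both $\{T''(f),T''(g),T''(h)\}$ and $T''(\{f,g,h\})$ through this decomposition and invoking the Peirce multiplication rules, in particular the orthogonality of the Peirce-$2$ and Peirce-$0$ spaces of $u_x$ and the vanishing of $\{B''_0(u_x),B''_0(u_x),B''_2(u_x)\}$, the element $\Xi_{f,g,h}:=\{T''(f),T''(g),T''(h)\}-T''(\{f,g,h\})$ lies in $P_0(u_x)(B'')$, i.e. $\Xi_{f,g,h}\perp u_x$, for every $x$.

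The crux of the argument is that the $u_x$ are \emph{mutually orthogonal}. For $x\neq y$ the point evaluations satisfy $\|\delta_x\pm\delta_y\|=2$, so since $T'$ is an isometry the restrictions $\psi_x:=\phi_x|_E,\ \psi_y:=\phi_y|_E$ satisfy $\|\psi_x\pm\psi_y\|_{E'}=2$. As restriction to $E$ does not increase the norm while $\|\phi_x\|=\|\phi_y\|=1$, we get $\|\phi_x\pm\phi_y\|_{B'}=2=\|\phi_x\|+\|\phi_y\|$; because $B'$ is the predual of the JBW$^*$-triple $B''$, Lemma \ref{l supremum}$(a)$ gives $\phi_x\perp\phi_y$, that is, $u_x\perp u_y$. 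By Lemma \ref{l supremum}$(b)$ the supremum $u:=\bigvee_{x\in X}u_x$ then exists in $\mathcal{U}(B'')$, and this is the required partial isometry.

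It remains to verify the two conclusions for this single $u$, and here is where the main obstacle lies: this mutual-orthogonality step is exactly the extra input, absent from the C$^*$-algebra proof of Theorem \ref{theorem1iso} where suprema of projections exist unconditionally, that is needed to form the tripotent supremum. Granting it, the rest is Peirce bookkeeping. For the sandwich identity, since $\Xi_{f,g,h}\perp u_x$ for every $x$, the last assertion of Lemma \ref{l supremum}$(b)$ gives $\Xi_{f,g,h}\perp u$, hence $Q(u)\Xi_{f,g,h}=0$, which is precisely $\{u,\{T(f),T(g),T(h)\},u\}=\{u,T(\{f,g,h\}),u\}$. For the isometry I would expand $Q(u)T''(f)=\{u,T''(f),u\}$ into the orthogonal constituents of $u$: the diagonal terms give $\{u_x,T''(f),u_x\}=f(x)u_x$, while the off-diagonal terms $\{u_x,T''(f),u_y\}$ with $x\neq y$ vanish upon writing $T''(f)=f(y)u_y+P_0(u_y)(T''(f))$ and using $u_x\perp u_y$ together with $\{B''_2(u_y),B''_0(u_y),\cdot\}=0$. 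Thus $Q(u)T''(f)=\sum_x f(x)\,u_x$, and since the $u_x$ are orthogonal tripotents $\|\sum_x f(x)u_x\|=\sup_{x\in X}|f(x)|=\|f\|$; restricting to $A$ shows $\{u,T(\cdot),u\}$ is an isometry. The only points needing extra care are the weak$^*$ limits implicit in $u=\bigvee_x u_x$ and in $Q(u)T''(f)=\sum_x f(x)u_x$, which are justified by the separate weak$^*$ continuity of the triple product on $B''$.
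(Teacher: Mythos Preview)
Your argument is correct and follows the paper's proof essentially step for step: the local decomposition $T(f)=f(x)u_x+P_0(u_x)(T(f))$ from Theorem~\ref{t Chu Wong Prop 4.3}, the mutual orthogonality of the $u_x$ via $\|\phi_x\pm\phi_y\|=2$ and Lemma~\ref{l supremum}$(a)$, the supremum $u=\bigvee_x u_x$ by Lemma~\ref{l supremum}$(b)$, and the conclusion $\Xi_{f,g,h}\perp u$ are exactly the paper's steps.

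The only divergence is the isometry verification. Instead of your explicit expansion $Q(u)T(f)=\sum_x f(x)u_x$ (whose weak$^*$ bookkeeping for infinite $X$ you rightly flag as delicate, since separate weak$^*$ continuity only lets you pass to the limit one outer variable at a time, and the norm identity $\|\sum_x f(x)u_x\|=\sup_x|f(x)|$ still needs a separate $\geq$ argument), the paper uses a one-line squeeze: pick $x_0$ with $|f(x_0)|=\|f\|$ and use $u_{x_0}\leq u$ to obtain
\[
\|f\|=\|T(f)\|\geq\|Q(u)T(f)\|\geq\|Q(u_{x_0})Q(u)T(f)\|=\|Q(u_{x_0})^2T(f)\|=|f(x_0)|=\|f\|,
\]
which sidesteps all limit issues. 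Your route can be made rigorous, but the paper's is shorter.
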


\begin{proof} Keeping in mind the notation in the proof of Theorem \ref{theorem1iso}, we write $A=C_0(X,\mathbb{R})$, $\Omega(A)_{\mathbb{R}}=\{\delta_x : x\in X\}$ and $$\mathcal{Q} = \left\{ \phi \in \partial B^{\prime}_1 : \phi|_{E} \in \Omega(A)_{\mathbb{R}}\right\}.$$
By Theorem \ref{t Chu Wong Prop 4.3}, whose proof is valid when $B$ is a real JB$^*$-triple, for each $x\in X$, there exists $\phi_x\in \mathcal{Q}$ and a minimal partial isometry $u_x\in B''$ such that $T'(\phi_x) = \delta_x$,
$u_x$ is the support partial isometry of $\phi_x$, \begin{equation}\label{4.3.13c} {T(f)} = \delta_x (f) {u_{x}} + P_0 (u_{x}) (T(f)),
\end{equation} and hence \begin{equation*} T(\{f,g,h\})- \{T(f),T(g),T(h)\}\in B''_0 (u_{x})
\end{equation*} for every $f,g,h\in A$ and $x\in X$.

We shall prove now that $u_x\perp u_y$ whenever $x\neq y$ in $X$. Indeed, for $x\neq y$ we have $\delta_x \perp \delta_y$, and thus
$$2= \|\delta_x \pm \delta_y\| = \|T'(\phi_x\pm \phi_y)\|\leq \|\phi_x\pm \phi_y\|\leq \|\phi_x\| +\| \phi_y\| =2. $$ It follows from Lemma \ref{l supremum}$(a)$ and the comments preceding it, that $\phi_x\perp \phi_y$, or equivalently, $u_x\perp u_y$. Therefore $(u_x)_{x\in X}$ is a family of mutually orthogonal minimal tripotents in $B''$. The supremum $u =  \bigvee_{x} u_x$ exists and defines a tripotent in $B''$ (cf. Lemma \ref{l supremum}$(b)$). Since, by $(\ref{4.3.13c})$, for each $f,g,$ and $h$ in $A$ and $x\in X$, we have $T(\{f,g,h\})- \{T(f),T(g),T(h)\}\perp u_{x}$, we deduce from Lemma \ref{l supremum}$(b)$, that $T(\{f,g,h\})- \{T(f),T(g),T(h)\}\perp u,$ for every $f,g,h\in A$.

In order to prove the last statement, observe that for each
$f\in A$ we can take $x_0\in X$ with $\|f\|=|f(x_0)|$. Let $\phi_{x_0}\in \partial
B'_1$ such that $T'(\phi_{x_0})=\delta_{x_0}$. Then,
\begin{align*}
\|f\|=\|T(f)\|&\geq\|\{u,T(f),u\}\|\geq\|\{u_0,\{u,T(f),u\},u_0\}\|\\
&\geq\|Q(u_{x_0}) Q(u)(T(f)) \|=\|Q(u_{x_0})^2 (T(f))\|\\
&=\|\{u_0,\{u_0,T(f),u_0\},u_0\}\|=\|f(x_{x_0})u_{x_0}\|=\|f\|,
\end{align*} which proves
$$\|\{u,T(f),u\}\|=\|f\|\qquad (f\in A).$$
\end{proof}

\begin{remark}\label{r last} When in the above Theorem \ref{theorem2iso}, $B$ is a real C$^*$-algebra,
the projection $p= u^* u$ coincides with the one considered in the proof of Theorem \ref{theorem1iso}.
\end{remark}

Using the previous results, we can now show that if $T$ is an
isometry between two real C$^*$-algebras $A$ and $B$, then $T$ is a
local triple homomorphism via a tripotent in $B''$. That does not
however imply that $T$ is a triple homomorphism on the whole algebra
$A$ (compare Example \ref{examp real/complex type to ChuWong Thm 3.10}). The following theorem is an
extension of the results in \cite{chu9,chu2}.

We recall that given an element $a$ in a real JB$^*$-triple $A$, the real
JB$^*$-subtriple, $A_a,$ generated by $a$ is linearly isometric to the
real JB$^*$-triple $C_0(X,\mathbb{R})$ of real-valued continuous functions on
$X$ vanishing at infinity (see \cite {chu0} and \cite{kaup1}).

\begin{theorem}\label{t 2iso}
Let $T:A\rightarrow B$ be a not necessarily surjective
linear isometry between two real JB$^*$-triples.
Then, for each $a\in A$, there exists a tripotent $u\in B''$ such that
\begin{enumerate}[$(a)$]
\item $\{u,T(\{f,g,h\}),u\}=\{u,\{T(f),T(g),T(h)\},u\}$, for all $f,g,h$ in the real JB$^*$-subtriple generated by $a$;
\item The mapping $\{u,T(\cdot),u\} :A_a\rightarrow B''$ is a linear isometry.
\end{enumerate}
Furthermore, when $B$ is a real C$^*$-algebra, the projection $p= u^* u$ satisfies that
$T(\cdot)p :A_a\rightarrow B''$ is an isometric triple homomorphism.
\end{theorem}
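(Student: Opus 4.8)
The plan is to deduce the result from Theorems \ref{theorem1iso} and \ref{theorem2iso} together with the structural fact, recalled just above, that the real JB$^*$-subtriple $A_a$ generated by a single element is isometrically triple-isomorphic to some $C_0(X,\mathbb{R})$. First I would fix $a\in A$ and let $\iota : C_0(X,\mathbb{R}) \to A_a$ be such a triple isomorphism. Since $A_a$ is a norm-closed subtriple of $A$, the restriction $T|_{A_a} : A_a \to B$ remains a linear isometry, and hence $S := T|_{A_a}\circ \iota : C_0(X,\mathbb{R}) \to B$ is a (not necessarily surjective) linear isometry from $C_0(X,\mathbb{R})$ into the real JB$^*$-triple $B$.

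I would then apply Theorem \ref{theorem2iso} verbatim to $S$. This yields a tripotent $u\in B''$ with $\{u,S(\{\tilde{f},\tilde{g},\tilde{h}\}),u\}=\{u,\{S(\tilde{f}),S(\tilde{g}),S(\tilde{h})\},u\}$ for all $\tilde{f},\tilde{g},\tilde{h}\in C_0(X,\mathbb{R})$ and such that $\{u,S(\cdot),u\}$ is an isometry. Writing $f=\iota(\tilde{f})$, $g=\iota(\tilde{g})$, $h=\iota(\tilde{h})$ and using that $\iota$ preserves the triple product, these two conclusions translate, with the same $u$, into $\{u,T(\{f,g,h\}),u\}=\{u,\{T(f),T(g),T(h)\},u\}$ and the isometry of $\{u,T(\cdot),u\}$ on $A_a$; this is exactly statements $(a)$ and $(b)$.

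For the final assertion I would assume $B$ is a real C$^*$-algebra and invoke Remark \ref{r last}: the projection $p=u^*u$ coming from the tripotent $u$ above coincides with the projection $\bigvee_{x}p_x$ constructed in the proof of Theorem \ref{theorem1iso} for the same isometry $S$. That proof shows precisely that, for this projection, $S(\cdot)p$ satisfies the triple-homomorphism identities of $(\ref{secondresult})$ and is isometric onto its image. Transporting this back through $\iota$ gives that $T(\cdot)p : A_a \to B''$ is an isometric triple homomorphism, as required.

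I do not anticipate a substantial obstacle, since the analytic core, namely the support-tripotent and extreme-point machinery of Theorem \ref{t Chu Wong Prop 4.3} and the orthogonal-supremum argument of Lemma \ref{l supremum}, has already been carried out in the preparatory theorems. The only point demanding care is the identification $A_a\cong C_0(X,\mathbb{R})$: one must use that it is a genuine \emph{triple} isomorphism, so that the triple-product identities transfer faithfully, and check that the restriction $T|_{A_a}$ inherits the isometry property before composing with $\iota$.
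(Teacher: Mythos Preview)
Your proposal is correct and follows essentially the same approach as the paper's own proof: reduce to $C_0(X,\mathbb{R})$ via the single-generator identification $A_a\cong C_0(X,\mathbb{R})$, then invoke Theorems \ref{theorem1iso} and \ref{theorem2iso} together with Remark \ref{r last}. The paper's proof is simply a one-sentence version of what you have spelled out in detail.
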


\begin{proof}
Since $A_a$ is linearly isometric to the JB$^*$-triple
$C_0(X,\mathbb{R})$ of all real-valued continuous functions on $X$
vanishing at infinity (cf. \cite{kaup1}), the result follows
immediately from Theorem \ref{theorem1iso}, Theorem
\ref{theorem2iso} and Remark \ref{r last}.
\end{proof}

\textbf{Acknowledgements:} The useful comments and suggestions made by the Referee in his/her report were fundamental to improve the presentation and quality of the paper. We are grateful to the referee for his/her thorough suggestions.

\bibliographystyle{amsplain}

\end{document}